\documentclass[12pt]{amsart}

\usepackage[a4paper,margin=1in]{geometry}
\usepackage{amsmath,amssymb,amsthm,mathtools}
\usepackage{enumitem}
\usepackage[hidelinks]{hyperref}
\usepackage{microtype}
\allowdisplaybreaks

\newcommand{\C}{\mathbb C}
\newcommand{\R}{\mathbb R}
\newcommand{\Pj}{\mathbb P}
\newcommand{\dbar}{\bar\partial}
\newcommand{\Span}{\operatorname{Span}}
\newcommand{\Sym}{\operatorname{Sym}}

\theoremstyle{plain}
\newtheorem{theorem}{Theorem}[section]
\newtheorem{proposition}[theorem]{Proposition}
\newtheorem{lemma}[theorem]{Lemma}

\newtheorem{question}{Question}

\theoremstyle{definition}
\newtheorem{definition}[theorem]{Definition}

\theoremstyle{remark}
\newtheorem{remark}[theorem]{Remark}

\title[Wronskian curvature positivity and holomorphic projective connections]{Wronskian curvature positivity is equivalent to 
the existence of a holomorphic projective connection}

\author{Pengchao Wang}
\address{%
  Academy of Mathematics and Systems Science,
  Chinese Academy of Sciences,
  Beijing 100190, China
}
\email{blowa8@gmail.com}

\author{Song-Yan Xie}
\address{%
  State Key Laboratory of Mathematical Sciences,
  Academy of Mathematics and Systems Science,
  Chinese Academy of Sciences, Beijing 100190, China;\\
  School of Mathematical Sciences,
  University of Chinese Academy of Sciences,
  Beijing 100049, China
}
\email{xiesongyan@amss.ac.cn}

\subjclass[2020]{32H30, 32L05, 53B10}

\keywords{Wronskian curvature positivity, holomorphic projective connection}

\date{\today}

\begin{document}

\begin{abstract}
Noguchi introduced the notion of Wronskian curvature positivity in his Second Main
Theorem and asked for further examples to which his theorem applies.  We give a
complete answer: a complex manifold admits a smooth connection satisfying this
condition if and only if it admits a holomorphic projective connection.  For a
fixed torsion-free connection, the condition is equivalent to the holomorphicity
of its projective class.
\end{abstract}

\maketitle

\section{Introduction}

\subsection{Noguchi's theorem and the question studied here}
Let $M$ be a compact complex manifold of dimension~$n$, and write
$T_M\coloneqq T^{1,0}M$ for its holomorphic tangent bundle.  Following Noguchi~\cite{Noguchi2011},
by a $C^\infty$ connection in $T_M$ we mean a map
\begin{equation}\label{Noguchi connection}
 (U,V)\longmapsto\nabla_UV,\qquad U,V\in\Gamma^\infty(T_M),
\end{equation}
that is complex linear in both variables, $C^\infty$-linear in the first variable,
and satisfies
\[
 \nabla_U(aV)=U(a)V+a\nabla_UV
\]
for every smooth complex-valued function~$a$.

Let $f\colon U\to M$, $U\subset\C$, be a holomorphic curve.  Define the
covariant derivatives along~$f$ recursively by
\[
 F_1\coloneqq f',\qquad F_{k+1}\coloneqq\nabla_{f'}F_k\;\;(k\ge1),
\]
and set
\[
 W(\nabla,f)\coloneqq F_1\wedge\cdots\wedge F_n\;\in\; f^*K_M^* .
\]
 Here
\[
 K_M\coloneqq\bigwedge^{\!n}T_M^* \qquad\text{and}\qquad
 K_M^*=\bigwedge^{\!n}T_M
\]
are the canonical and anticanonical bundles; thus the wedge product of the
$n$ covariant derivatives naturally takes values in $f^*K_M^*$.

The expression $\log|W(\nabla,f)|$ is understood locally and does not require a
Hermitian metric.  On a sufficiently small disk whose image lies in a holomorphic
trivializing neighbourhood of $K_M^*$, choose a holomorphic frame~$\varepsilon$
and write $W(\nabla,f)=w_f\varepsilon$.  If $\widetilde\varepsilon=g\varepsilon$ is
another holomorphic frame with $g$ nowhere vanishing, then
\begin{equation}\label{change of frame}
 \widetilde w_f=g(f)^{-1}w_f,\qquad
 \log|\widetilde w_f|=\log|w_f|-\log|g\circ f|.
\end{equation}
The last term is harmonic; consequently holomorphicity of the scalar coefficient
and subharmonicity of its logarithmic modulus are independent of the chosen
holomorphic frame.

For a divisor~$D$
we write $L(D)\coloneqq\mathcal O_M(D)$.  A divisor has \emph{simple normal crossings} if
locally it can be defined by an equation $z^1\cdots z^k=0$ in suitable holomorphic
coordinates $(z^1,\dots,z^n)$.  A submanifold $N\subset M$ is \emph{$\nabla$-totally
geodesic} if, for any local extensions $X,Y$ of tangent vector fields to~$M$,
the restriction of $\nabla_XY$ to~$N$ is again tangent to~$N$ (the normal component
is independent of the chosen extensions).

The Nevanlinna characteristic $T_f(r,L)$ measures the growth of the curve with
respect to the line bundle~$L$, the truncated counting function
$N_n(r,f^*D_i)$ records the intersections with~$D_i$ (multiplicities are cut off
at~$n$), and $S_f(r)$ denotes the standard small error term; for a comprehensive
treatment we refer to~\cite{NW2014,Ru2021}.

We recall the principal theorem of Noguchi~\cite{Noguchi2011}.

\begin{theorem}\label{thm:noguchi}
Let $f\colon\C\to M$ be $\nabla$-nondegenerate, i.e.\ $W(\nabla,f)\not\equiv0$,
and $D=\sum_i D_i$ be an effective reduced divisor with only simple normal
crossings.  Assume that
\begin{enumerate}[label=\textup{(\roman*)}]
\item $\log|W(\nabla,f)|$ is subharmonic;
\item every irreducible component $D_i$ is $\nabla$-totally geodesic.
\end{enumerate}
Then
\[
 T_f(r,L(D))+T_f(r,K_M)
 \le \sum_i N_n(r,f^*D_i)+S_f(r) \qquad \parallel,
\]
where ``$\parallel$'' means that the estimate holds for all $r>0$ outside a set of
finite Lebesgue measure.
\end{theorem}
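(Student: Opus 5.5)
The plan follows Noguchi's strategy: a logarithmic-derivative-lemma argument built on a Wronskian-type auxiliary function. First we reduce to the local description. Cover $M$ by finitely many coordinate charts adapted to $D$, so that near each point $D=\{z^1\cdots z^k=0\}$. Choose a Hermitian metric $\|\cdot\|$ on $K_M^*$ and on each $L(D_i)$, with canonical sections $\sigma_i$. We then consider the auxiliary function
\[
 \xi\coloneqq \frac{\|W(\nabla,f)\|}{\prod_i\|\sigma_i(f)\|}
\]
viewed as a section of $f^*(K_M^*\otimes L(D)^{-1})$. By \eqref{change of frame}, $\log|w_f|$ is well defined up to harmonic terms, so hypothesis (i) says that $\log\|W(\nabla,f)\|$ is subharmonic modulo a smooth curvature term. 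This is the replacement for holomorphicity of the classical Wronskian, and it is what lets Jensen's formula produce an inequality in the correct direction.

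Next we apply Jensen's formula to $\log\xi$. The term $\frac{1}{4\pi}\int_{|z|<r}\ldots dd^c\log\|W\|$ is bounded below by the counting function of the zeros of $W$, by subharmonicity. The curvature of $K_M^*\otimes L(D)^{-1}$ contributes $-T_f(r,K_M)-T_f(r,L(D))+O(1)$. The denominator contributes $\sum_i N(r,f^*D_i)$. This gives
\[
 T_f(r,L(D))+T_f(r,K_M)\le \sum_i N(r,f^*D_i)-N(r,\text{zeros of }W)+\int_{|z|=r}\log\xi\,\frac{d\theta}{2\pi}+O(1).
\]

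Two estimates remain.

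\textbf{Truncation.} Near a point where $f$ meets $D_i$ with multiplicity $m>n$, we must show that $W(\nabla,f)$ vanishes to order at least $m-n$ in the normal direction. This is where hypothesis (ii) is used. Write $F_k$ in the adapted frame. Because $D_i=\{z^1=0\}$ is $\nabla$-totally geodesic, the $\partial/\partial z^1$-component of $\nabla_XY$ vanishes on $\{z^1=0\}$ whenever $X$ and $Y$ are tangent to it. By induction, the $z^1$-component of $F_k$ is a combination of $(f^1)^{(j)}$, $j\le k$, plus terms divisible by $f^1$. It therefore vanishes to order $\ge m-k$. Expanding the wedge along the $z^1$-row gives $\operatorname{ord}W\ge m-n$, which converts $N$ into $N_n$.

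\textbf{Proximity term.} We must show $\int\log^+\xi\le S_f(r)$. Locally, $W/\prod z^i(f)$ can be expanded. Totally geodesicity again gives that $F_k^{1}/f^1$ is a polynomial in the logarithmic derivatives $(\log f^1)^{(j)}$ and in the derivatives of the other coordinate functions, with smooth coefficients from the Christoffel symbols. This puts $\log^+\xi$ under control of finitely many $\log^+|(\log (z^i\circ f))^{(j)}|$ and $\log^+|(z^i\circ f)^{(j)}|$. The logarithmic derivative lemma (as in \cite{NW2014}) bounds their means by $S_f(r)$.

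The main obstacle is that $\nabla$ is only $C^\infty$ and not holomorphic. The Christoffel coefficients are therefore non-holomorphic functions of $f$, and the components of $F_k$ are not meromorphic. Two things must be verified in this setting. First, the inductive vanishing/divisibility must still hold; this needs care because smooth coefficients along $f$ are merely bounded, not holomorphic. Second, the logarithmic derivative lemma must apply only to genuinely meromorphic quantities $z^i\circ f$. I would handle both by keeping the smooth coefficients as bounded factors, and by using compactness of $M$ to bound them uniformly.
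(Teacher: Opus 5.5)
The paper does not prove this theorem. It is Noguchi's second main theorem, quoted from his 2011 paper as motivation, so there is no internal argument to compare with. Judged on its own, your architecture is the natural one: the Green--Jensen formula for $\log\xi$, truncation from total geodesy, and the proximity term via a logarithmic derivative lemma. The Jensen and truncation steps can be completed. The proximity step, however, has a genuine gap.

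You bound $\log^+\xi$ by the local quantities $\log^+|(\log(z^i\circ f))^{(j)}|$ and $\log^+|(z^i\circ f)^{(j)}|$, and then invoke the classical lemma on logarithmic derivatives, treating $z^i\circ f$ as meromorphic functions on $\C$. They are not meromorphic on $\C$:
\begin{itemize}
\item $z^i$ is a coordinate on one chart $U$, so $z^i\circ f$ exists only on the open set $f^{-1}(U)$, while a circle $|z|=r$ generally passes through many charts.
\item A compact complex manifold may carry no nonconstant meromorphic functions at all, so nothing global is available to feed into the classical lemma.
\item $\log^+|(z^i\circ f)^{(j)}|$ for a tangential coordinate is not a logarithmic derivative. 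For a global meromorphic $g$, the mean of $\log^+|g^{(j)}|$ can be as large as $m(r,g)$. In your picture it is small only because $z^i$ is bounded on its chart, which is again a local fact.
\end{itemize}
The missing ingredient is a chart-free lemma on logarithmic derivatives. The standard one is for the lift $J_n(f)$ of $f$ to the logarithmic jet bundle $J_n(M;\log D)$: for a continuous Finsler metric, $\int_{|z|=r}\log^+\|J_n(f)\|\,\frac{d\theta}{2\pi}=S_f(r)$, as in Noguchi's theory of logarithmic jet bundles. It applies because $f(\C)\not\subset D$, which follows from $W(\nabla,f)\not\equiv0$ by your own recursion. Your local expansion must then become a uniform pointwise bound $\xi\le C(1+\|J_n(f)\|)^N$ on $\C\setminus f^{-1}(D)$. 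To get it, choose finitely many adapted charts $U_\alpha$ and compact sets $K_\alpha\subset U_\alpha$ covering $M$. Over $K_\alpha$ the fibre coordinates of $J_n(M;\log D)$ are exactly $(\log f^l)^{(s)}$ for $l\le k$ and $(f^l)^{(s)}$ for $l>k$, and compactness makes all constants uniform.

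Your handling of the non-holomorphic coefficients also needs a sharper mechanism than keeping them as bounded factors, because the recursion $F_{j+1}=\partial_tF_j+\Gamma(f)(f',F_j)$ differentiates them.
\begin{itemize}
\item Total geodesy gives $\Gamma^l_{ab}=0$ on $\{z^l=0\}$ for $a,b\ne l$. By Hadamard's lemma in real coordinates, $\Gamma^l_{ab}=z^lA^l_{ab}+\overline{z^l}B^l_{ab}$ with $A,B$ smooth.
\item Hence $F^l_j$ is not divisible by $f^l$. Rather, for $l\le k$ the quotient $F^l_j/f^l$ (and for $l>k$ the component $F^l_j$ itself) is a polynomial of weight at most $j$ in the logarithmic jet coordinates. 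Its coefficients are smooth functions of $f$ times products of the unimodular factors $\overline{f^c}/f^c$.
\item The induction closes because $\partial_t(\overline{f^c}/f^c)=-(\overline{f^c}/f^c)(\log f^c)'$ and $\partial_t(A\circ f)=\sum_c(\partial_{z^c}A)(f)\,(f^c)'$, with $(f^c)'=f^c(\log f^c)'$ for $c\le k$.
\end{itemize}
The same expansion gives $|F^l_j|=O(|t-t_0|^{m_l-j})$ at a point $t_0$ where $f^*D_l$ has multiplicity $m_l$, hence $|w_f|=O(|t-t_0|^{\sum_l(m_l-n)^+})$. Finally, since $w_f$ is only smooth, the ``counting function of the zeros of $W$'' must be defined through the Riesz measure of the subharmonic function $\log|w_f|$. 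Its point mass at $t_0$ is the Lelong number, which the last estimate bounds below by $\sum_l(m_l-n)^+$. This is what converts $N$ into $N_n$ in your Jensen inequality.
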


The idea of using connection-theoretic Wronskians to prove second main theorems
had already appeared in earlier works; we refer, for instance,
to~\cite{Siu1990, Tiba2012}.  It can be traced back to the Weyl--Ahlfors theory of
derived curves~\cite{Ahlfors1941, HuynhXie2022}.  Noguchi's insight was to
formulate a clean curvature positivity condition that makes the argument
completely general and coordinate free.

For projective space equipped with the Fubini--Study connection, Noguchi proves
that the Wronskian is holomorphic and that the totally geodesic submanifolds are
the projective linear subspaces.  The resulting special case recovers Cartan's
Second Main Theorem, giving a geometric interpretation of Cartan's result through
connections and Wronskians~\cite{Noguchi2011}.

At the end of his paper, Noguchi poses the following question:
\begin{question}[{\cite[p.~179]{Noguchi2011}}]\rm \label{Noguchi's question}
Find more examples to which Theorem~\ref{thm:noguchi} applies.
\end{question}

The scope of Theorem~\ref{thm:noguchi} as a general method depends strongly on the availability of such examples.  The Green--Griffiths conjecture~\cite{GreenGriffiths1980} predicts that every entire curve $f\colon\C\to X$ in a projective manifold
$X$ of general type must be algebraically degenerate, i.e.\ its image is contained in a proper algebraic subvariety of $X$.  If Wronskian curvature positivity (already with $D=\emptyset$) were available on a broad class of
varieties of general type, Noguchi's second main theorem would provide a powerful approach to this conjecture.  Our main result shows, however, that the Wronskian condition is highly restrictive: it is equivalent, at the level of existence, to admitting a holomorphic projective connection.  Thus the range of varieties to which this strategy can apply is much narrower than one might initially expect.

\subsection{Basic definitions}
We always assume $\dim_{\C}X = n \ge 2$.  

\begin{definition}[Wronskian curvature positivity]\label{def:wcp}
Let $X$ be a complex manifold of dimension $n\ge2$ and $\nabla$ a
$C^\infty$ connection in $T_X\coloneqq T^{1,0}X$ as introduced
in~\eqref{Noguchi connection}.
For every holomorphic disk $f\colon\Delta\to X$, every sufficiently small
subdisk $\Delta_0\subset\Delta$ whose image lies in a holomorphic trivializing
neighbourhood of $K_X^*$, and every holomorphic frame $\varepsilon$ on that
neighbourhood, write
\[
 W(\nabla,f)|_{\Delta_0}=w_f\,\varepsilon .
\]
We say that $\nabla$ satisfies \emph{Wronskian curvature positivity} if,
whenever $W(\nabla,f)\not\equiv0$, the function $\log|w_f|$ (with the value
$-\infty$ allowed at zeros of $w_f$) is subharmonic on $\Delta_0$.
\end{definition}

The change-of-frame formula~\eqref{change of frame} shows that this condition is
independent of the chosen holomorphic frame; in particular it does not involve a
Hermitian metric.

\medskip

The \emph{torsion} $T^\nabla$ of a connection is given by
\[
T^\nabla(U,V)\coloneqq\nabla_U V-\nabla_V U-[U,V].
\]
$\nabla$ is \emph{torsion-free} when $T^\nabla=0$.  For an arbitrary connection,
its \emph{symmetrization}
\[
\nabla^S_U V \coloneqq \nabla_U V-\frac12\,T^\nabla(U,V)
           = \frac12\bigl(\nabla_U V+\nabla_V U+[U,V]\bigr)
\]
is a globally well-defined torsion-free connection.  In local holomorphic coordinates,
writing $\nabla_{\partial_i}\partial_j=\Gamma^k_{ij}\partial_k$, the coefficients of
$\nabla^S$ are
\[
S^k_{ij}\coloneqq \frac12\bigl(\Gamma^k_{ij}+\Gamma^k_{ji}\bigr).
\]

Two torsion-free connections $D$ and $\widehat D$ are \emph{projectively equivalent}
if locally there exists a smooth $1$-form $\alpha\in T_X^*$ such that
\begin{equation}\label{eq:projective-change}
 \widehat D_U V = D_U V + \alpha(U)V + \alpha(V)U .
\end{equation}
For a torsion-free connection with local coefficients $S^k_{ij}$, its
\emph{projective Christoffel symbols} are
\begin{equation}\label{eq:Pi}
 \Pi^k_{ij}
 \coloneqq S^k_{ij}
 - \frac1{n+1}\bigl(
   \delta_i^k S^a_{aj} + \delta_j^k S^a_{ai}
   \bigr),
\end{equation}
where we use the Einstein summation convention.  The extra term
$\alpha(U)V+\alpha(V)U$ in \eqref{eq:projective-change} is a \emph{pure trace} term:
as a $(1,2)$-tensor its trace with respect to the first two arguments equals
$(n+1)\alpha$, and the whole tensor is completely determined by this trace.
Such a term disappears from the projective Christoffel symbols \eqref{eq:Pi}.
The projective class is called \emph{holomorphic} if
\[
\dbar\Pi^k_{ij}=0
\]
in any holomorphic coordinate chart.  This is a coordinate-free condition:
under a holomorphic change of coordinates the inhomogeneous part of the
transformation law of $\Pi$ is holomorphic, so it vanishes after applying $\dbar$.

In this paper, a \emph{holomorphic projective connection} means a holomorphic
projective class in the above sense.  Equivalently, it is given locally by
torsion-free holomorphic connections whose differences on overlaps have the
pure-trace form \eqref{eq:projective-change}.  This is the standard local
description of a holomorphic normal projective connection.  For compact
K\"ahler manifolds, the equivalent Atiyah-class formulation appears in
\cite[Definition~1.12]{JahnkeRadloff2015} and the discussion that follows it.

\subsection{Main results}
\begin{theorem}\label{thm:existence-equivalence}
Let $X$ be a complex manifold of dimension $n\geq2$. The following conditions are equivalent.
\begin{enumerate}[label=\textup{(\alph*)}]
\item $X$ admits a $C^\infty$ connection satisfying Wronskian curvature positivity.
\item $X$ admits a torsion-free $C^\infty$ connection satisfying Wronskian curvature positivity.
\item $X$ admits a holomorphic projective connection.
\end{enumerate}
\end{theorem}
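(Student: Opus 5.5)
The plan is to prove (a)$\Rightarrow$(b)$\Rightarrow$(c)$\Rightarrow$(a). The whole argument rests on an explicit local formula for $w_f$. In a holomorphic chart, $f$ is a holomorphic map into $\C^n$. Then $F_1=f'$, and $F_2=f''+\Gamma(f',f')$, where only the symmetric part $S$ enters because the two arguments coincide. Inductively, $F_k=f^{(k)}+P_k$, where $P_k$ is a polynomial in $f',\dots,f^{(k-1)}$ whose coefficients are $S^k_{ij}\circ f$ and their $z$-derivatives along $f$. A derivative along $f$ of a coefficient $a$ equals $\partial a(f')+\dbar a(\overline{f'})$.

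For (a)$\Rightarrow$(b), the key observation is that $\nabla_{f'}f'$ involves only $S$. Iterating, $F_k$ involves $\nabla_{f'}$ applied to the field $F_{k-1}$ along $f$, and that field is not tangent-symmetric data. So I will first check carefully whether $W(\nabla,f)=W(\nabla^S,f)$. It is not automatic: $F_3=\nabla_{f'}F_2$ contains $\Gamma(f',F_2)$, which sees the antisymmetric part. However, modulo the span of $F_1,\dots,F_{k-1}$, one has $F_k\equiv f^{(k)}+(\text{terms})$. I expect the torsion contributions to land in lower spans only after an adjustment. If this fails, I will instead run the argument (a)$\Rightarrow$(c) directly with $\Gamma$ in place of $S$, using only the quadratic term $\Gamma(f',f')$ at the decisive order. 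In that case (b) follows from (c)$\Rightarrow$(a), since the construction there is torsion-free.

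For the core implication (b)$\Rightarrow$(c), I will localize at a point $p$ and take curves $f(t)=p+tv+\frac{t^2}{2}a+\dots$ with prescribed jets. Subharmonicity of $\log|w_f|$ where $w_f(0)\neq0$ means $\dbar_t\partial_t\log w_f\ge0$. Since $\log|w_f|$ is locally the real part of a function that is holomorphic only if $w_f$ is, the Laplacian of $\log|w_f|$ is $\mathrm{Re}\bigl(\partial_t(\dbar_t w_f/w_f)\bigr)$. The aim is to show that positivity for all jets forces the non-holomorphic part of $w_f$ to be a pure factor of $\overline{f'}$-type that is harmonic. Concretely, the non-holomorphic terms at lowest order come from $\dbar S(\,\cdot\,)(\overline{f'})$ contracted with the jets. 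These contributions are linear in freely chosen higher jets (the coefficients of $t^k$ for $k\le n$) and can therefore have either sign. Hence they must vanish, except for the part killed by the wedge product. That part is exactly the pure-trace part, because adding $\alpha(U)V+\alpha(V)U$ changes each $F_k$ by multiples of $F_1,\dots,F_{k-1}$ together with a scalar. More precisely, under a projective change $W$ gets multiplied by $\exp(\text{const}\int\alpha(f'))$-type factors. I will verify this identity first, because it also gives (c)$\Rightarrow$(a).

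The conclusion of (b)$\Rightarrow$(c) is $\dbar\Pi=0$, which gives (c). For (c)$\Rightarrow$(a), I take local holomorphic representatives $D_\lambda$ with projective Christoffel symbols $\Pi$, and glue them with a partition of unity $\rho_\lambda$ into $\nabla=\sum\rho_\lambda D_\lambda$. Differences of the $D_\lambda$ are pure trace, so $\nabla$ is torsion-free and projectively equivalent to each $D_\lambda$ locally, with smooth $\alpha$. By the transformation identity, $w_f^\nabla=e^{h}w_f^{D_\lambda}$, where $w_f^{D_\lambda}$ is holomorphic. The factor $h$ needs to have harmonic real part, or its Laplacian must be nonnegative. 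This is the main obstacle: the factor is probably of the form $\exp\bigl(c\int\alpha(f')\,dt\bigr)$ along the curve, which is not obviously harmonic when $\alpha$ is only smooth. I would first try to arrange $\alpha$ so that $\alpha(f')$ is holomorphic in $t$. Failing that, I would choose $\alpha$ to be $\partial$ of a plurisubharmonic or pluriharmonic local potential, and adjust the gluing (for example, gluing via the trace $S^a_{aj}=\partial_j\log\phi$) so that the resulting factor is $|\phi\circ f|^{c}$ with $\log\phi$ pluriharmonic. This yields the subharmonicity required in (a).
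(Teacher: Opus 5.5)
Your overall architecture matches the paper's: a pointwise jet-variation argument for (a)$\Rightarrow$(c), and gluing local holomorphic representatives by a partition of unity for (c)$\Rightarrow$(b). Your fallback on torsion is also the right one. As you suspect, $W(\nabla,f)=W(\nabla^S,f)$ fails in general for $n\ge3$, because $F_3$ contains $\tfrac12T^\nabla(f',F_2)$. So one should prove (a)$\Rightarrow$(c) directly for arbitrary $\nabla$, using only $F_2=f''+S(f)(f',f')$ at the decisive step.

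However, the step you call the main obstacle rests on a wrong identity. Under $\widehat D=D+\alpha\odot\operatorname{id}$ there is no scalar factor. In $\widehat F_{r+1}=D_{f'}\widehat F_r+\alpha(f')\widehat F_r+\alpha(\widehat F_r)f'$, the two $\alpha$-terms are multiples of $\widehat F_r$ and $F_1$, both lying in $\Span(F_1,\dots,F_r)$. Moreover $D_{f'}\widehat F_r=F_{r+1}+(\text{terms in }\Span(F_1,\dots,F_r))$. Inductively, $\widehat F_r=F_r+\sum_{j<r}a_{rj}F_j$ with unit diagonal, so $W(\widehat D,f)=W(D,f)$ exactly. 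For $\nabla=\sum\rho_\lambda D_\lambda$ this gives $W(\nabla,f)=W(D_\beta,f)$, which is holomorphic, and nothing more is needed.

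Your workarounds could not have rescued a genuine factor anyway. Requiring $\alpha(f')$ to be holomorphic in $t$ for every holomorphic disk forces $\partial_{\bar m}\alpha_j\,\overline{v^m}v^j=0$ for all $v$, i.e.\ $\dbar\alpha=0$. Likewise, $\partial$ of a pluriharmonic function is a holomorphic $1$-form. Either way $\nabla$ would be a global holomorphic connection in $T_X$. By Atiyah's theorem that is impossible on compact K\"ahler manifolds with nonzero Chern classes, for example compact ball quotients.

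The forward direction is also only a heuristic as written. Jets of order $\le n$ are not free linear parameters: they change $F_1(0),\dots,F_n(0)$ and hence $w(0)$. The variable that enters affinely is $q=f^{(n+1)}(0)$. By the triangular structure of the covariant jets, it leaves $w(0)$ and $w_{\bar t}(0)$ unchanged. It contributes $U_0\wedge q$ to $w_t(0)$ and $\partial_{\bar t}\mathcal U(0)\wedge q$ to $w_{t\bar t}(0)$, where $\mathcal U=F_1\wedge\cdots\wedge F_{n-1}$ and $U_0=\mathcal U(0)$. Positivity for all $q$ then gives $\partial_{\bar t}\mathcal U(0)\in\C U_0$, i.e.\ $\partial_{\bar t}F_j(0)\in H\coloneqq\Span(F_1(0),\dots,F_{n-1}(0))$.

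More importantly, your claim that ``the part killed by the wedge is exactly the pure-trace part'' is argued only in one direction (pure-trace terms are invisible to $W$). The converse is the real content, and it needs three further ingredients:
\begin{enumerate}[label=(\roman*)]
\item $\partial_{\bar t}F_2(0)=(\dbar S)_p(\bar v;v,v)$ lies in every hyperplane $H\ni v$, because $F_2(0),\dots,F_n(0)$ can be prescribed freely. Hence it lies in $\C v$. For $n=2$ one instead gets directly $w_{\bar t}(0)=\det\bigl(v,(\dbar S)_p(\bar v;v,v)\bigr)=0$.
\item Treating $v$ and $\bar v$ as independent then gives $(\partial_{\bar m}S)_p(v,v)\in\C v$ for each $m$.
\item One needs the algebraic lemma that a symmetric $B$ with $B(v,v)\in\C v$ for all $v$ has the form $B(u,v)=\beta(u)v+\beta(v)u$. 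This is exactly what yields $\dbar\Pi=0$.
\end{enumerate}
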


This result fully answers Noguchi's Question~\ref{Noguchi's question}: a manifold
satisfies his condition precisely when it carries a holomorphic projective
connection.

For a fixed torsion-free connection, the equivalence takes the following form.

\begin{theorem}\label{thm:connection-level}
Let $\nabla$ be a torsion-free $C^\infty$ connection in $T_X$. Then $\nabla$
satisfies Wronskian curvature positivity if and only if its projective class is
holomorphic.
\end{theorem}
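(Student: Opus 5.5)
Fix holomorphic coordinates $z$ on an open set and a holomorphic curve $f$, and write the Wronskian in the frame $\partial_1\wedge\cdots\wedge\partial_n$. I will prove both implications by analysing how $w_f$ depends on the Christoffel symbols.

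\textbf{Step 1: reduction to $\Pi$.} Along $f$ we have $F_1=f'$ and
\[
F_{k+1}=F_k'+\Gamma(f)(f',F_k),
\]
where $F_k'$ is the componentwise $\partial/\partial t$ derivative. By induction, $F_k=f^{(k)}+(\text{terms in }f',\dots,f^{(k-1)}\text{ with coefficients polynomial in }\Gamma\circ f\text{ and its }t\text{-derivatives})$. Since $\nabla$ is torsion-free, $\Gamma=S$. Now replace $S$ by $\Pi+\delta\alpha+\alpha\delta$ with $\alpha_j=\tfrac1{n+1}S^a_{aj}$. A projective change adds $\alpha(f')F_k+\alpha(F_k)f'$ to each step. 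The term $\alpha(F_k)f'$ is killed in the wedge, because $f'=F_1$ is already present. The term $\alpha(f')F_k$ rescales, modulo lower $F_j$'s, in a triangular way. The net effect, which I will check by induction, is
\[
W(\nabla,f)=\exp\Bigl(c_n\int \alpha(f')\,dt\Bigr)\cdot W(\nabla^\Pi,f)
\]
in the appropriate local sense. More precisely, if $\phi$ solves $\phi'=\alpha(f')$ locally, then $w_f=e^{N\phi}\,w^\Pi_f$ with $N=n(n+1)/2$. Here $w_f^\Pi$ is the Wronskian built with the (non-connection, but locally defined) symbols $\Pi$. The subtle point is that $\alpha$ is smooth, not holomorphic, so $\phi$ must be taken as a function on the disk with $\partial_t\phi=\alpha(f')$. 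A cleaner route is to compute directly that $\partial_{\bar t}\log w_f=N\,\alpha(f')$-type terms plus $\partial_{\bar t}$ of the $\Pi$-part.

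\textbf{Step 2: holomorphic $\Pi$ $\Rightarrow$ positivity.} Instead of the exponential, I will use the alternative normalization above: show that $\partial_{\bar t}w_f=\beta\,w_f$ with $\beta$ smooth whenever $\dbar\Pi=0$. The trace parts contribute only multiples of $w_f$, and the $\Pi$-parts are holomorphic along $f$. Then $w_f=e^{g}h$ with $h$ holomorphic, where $\partial_{\bar t}g=\beta$. This gives $\log|w_f|=\operatorname{Re}g+\log|h|$. For subharmonicity we need $\operatorname{Re}g$ subharmonic, i.e.\ $\operatorname{Re}\partial_t\beta\ge0$. Here $\beta=\sum_k\overline{\partial}_{\bar t}(\text{trace of }\Gamma\text{ along }f')$, and it is not automatically harmonic. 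So I need to refine Step 1 and exploit the freedom given by Definition~\ref{def:wcp}. Concretely, I must show the trace contributions combine into $\partial_t$ of something, so that $g=\overline{\phantom{x}}$-conjugate-harmonic or $|w_f|$ picks up a factor $|e^{N\phi}|$ with $\Delta\operatorname{Re}\phi=\operatorname{Re}\partial_{\bar t}\alpha(f')$. This is the main obstacle: computing $\Delta\log|w_f|$ explicitly and identifying it as a positive combination. If this does not work out, I will fall back on showing that the holomorphic-$\Pi$ condition forces the trace term to be locally $\partial$-exact, $\alpha=\partial\psi$ modulo holomorphic terms. Then the choice $\widehat\nabla$ with a holomorphic local representative changes $w_f$ by $e^{N\psi\circ f}$ times holomorphic, and I would verify the sign of $\Delta(\operatorname{Re}\psi\circ f)$.

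\textbf{Step 3: positivity $\Rightarrow$ holomorphic $\Pi$.} Test on curves with prescribed finite jets. At a point $p$, choose $f$ with $f'(0),\dots,f^{(n-1)}(0)$ in general position, so $w_f(0)\neq0$. Then $\log|w_f|$ is smooth near $0$, and subharmonicity gives $\partial_t\partial_{\bar t}\log w_f\ge0$ up to the real part. The leading dependence of this quantity on the jet is through $\partial_{\bar k}\Pi^m_{ij}$ contracted with $f'$, $f''$, and so on. The jets enter linearly in a high-order derivative, say $f^{(n)}(0)$, with arbitrary sign and phase. A Hermitian form that is bounded below yet linear in a freely scalable complex parameter must have vanishing linear coefficient. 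This forces $\partial_{\bar k}\Pi^m_{ij}\,v^iv^j\bar u^k=0$ for all $v,u$. Polarizing, and using that the trace part has been removed, gives $\dbar\Pi=0$.

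Together, Steps 2 and 3 give the two directions of Theorem~\ref{thm:connection-level}.
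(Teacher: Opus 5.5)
\textbf{The converse is not proved.} The direction ``holomorphic projective class $\Rightarrow$ positivity'' is missing, and the obstacle you hit in Step~2 comes from a false formula in Step~1. There is no factor $e^{N\phi}$: the Wronskian is \emph{exactly} invariant under a projective change. Suppose $\widehat\nabla_UV=\nabla_UV+\alpha(U)V+\alpha(V)U$ and, inductively, $\widehat F_k=F_k+\sum_{j<k}a_{kj}F_j$. Then
\[
\widehat F_{k+1}=\nabla_{f'}\widehat F_k+\alpha(f')\widehat F_k+\alpha(\widehat F_k)f'
=F_{k+1}+\bigl(\text{element of }\Span(F_1,\dots,F_k)\bigr).
\]
The term $\alpha(f')\widehat F_k$ is added to the $(k+1)$-st vector, where it is of lower order, so it rescales nothing. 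The change of basis is therefore unipotent and $W(\widehat\nabla,f)=W(\nabla,f)$; your $N$ is $0$.

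Your formula cannot be right in any case. The trace part of a torsion-free connection with holomorphic projective class can be an arbitrary smooth $(1,0)$-form $\alpha$: add $\alpha\odot\operatorname{id}$ to a local holomorphic connection. With your factor, near a point where $w^\Pi_f\neq0$ one would get $\Delta\log|w_f|=N\,\Delta\operatorname{Re}\phi$, which takes both signs. That would make the theorem you are proving false, and it is also why your fallback (making $\alpha$ $\partial$-exact and checking the sign of $\Delta\operatorname{Re}(\psi\circ f)$) cannot work. The missing ingredient is this unit-diagonal invariance, plus the remark that in a chart the symbols $\Pi^k_{ij}$ themselves define a torsion-free connection. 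It is symmetric, satisfies $\Pi^a_{aj}=0$, is projectively equivalent to $\nabla$, and is holomorphic when $\dbar\Pi=0$. Then $w_f=w^\Pi_f$ is holomorphic and $\log|w_f|$ is subharmonic, which is the entire converse.

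\textbf{Step~3: right mechanism, wrong details.} An inequality affine in a freely scalable complex parameter does force the linear coefficient to vanish, but several details need fixing.
\begin{itemize}
\item For $w_f(0)\neq0$ you need $f'(0),\dots,f^{(n)}(0)$ (equivalently $F_1(0),\dots,F_n(0)$) to be a basis, not $n-1$ jets.
\item The jet to vary is $f^{(n+1)}(0)$. It enters only through $\partial_tF_n(0)$ with coefficient one and leaves $w(0)$ and $w_{\bar t}(0)$ untouched. Varying $f^{(n)}(0)$ changes $w(0)$, so the Laplacian is no longer affine in the parameter.
\item More substantively, the vanishing coefficient does not give $\partial_{\bar k}\Pi^m_{ij}v^iv^j\bar u^k=0$. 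Since $F_1=f'$ sits in the wedge, the test is blind to components along $v=f'(0)$.
\end{itemize}
What one actually obtains, most cleanly in coordinates with $S(p)=0$, is $w_{\bar t}(0)=0$ when $n=2$. When $n\ge3$ one gets $\partial_{\bar t}(F_1\wedge\cdots\wedge F_{n-1})(0)\in\C\,F_1(0)\wedge\cdots\wedge F_{n-1}(0)$. Letting the hyperplane $\Span(F_1(0),\dots,F_{n-1}(0))\ni v$ vary then gives only $(\dbar S)(\bar v;v,v)\in\C v$; after separating the coefficient of each $\overline{v^m}$, this holds for each $\partial_{\bar m}S$. Getting from there to $\dbar\Pi=0$ requires the pure-trace lemma: a symmetric $B$ with $B(v,v)\in\C v$ for all $v$ equals $\beta\odot\operatorname{id}$. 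The trace-free projection then removes $\beta$. Polarization alone does not do this.
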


If the connection has torsion, the ``only if'' part still holds after passing to
the symmetrization.

\begin{theorem}\label{thm:torsion-forward}
Let $\nabla$ be an arbitrary $C^\infty$ connection in $T_X$, and $\nabla^S$ be
its symmetrization. If $\nabla$ satisfies Wronskian curvature positivity, then the
projective class of $\nabla^S$ is holomorphic.
\end{theorem}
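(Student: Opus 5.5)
Throughout, work in a local holomorphic chart and write $F_k$ in coordinates. Since $f$ is holomorphic, $F_1=f'$. Then
\[
F_2^k=f''^k+\Gamma^k_{ij}(f)f'^if'^j .
\]
Only the symmetric part $S^k_{ij}$ contributes here, because the expression is quadratic in $f'$. More generally, the $F_k$ are built from derivatives of $f$ and of the $\Gamma$'s along $f$.

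\textbf{Step 1: reduce to a pointwise $\dbar$-condition.} Let $w_f$ be the coefficient of $F_1\wedge\cdots\wedge F_n$ in the frame $\partial_1\wedge\cdots\wedge\partial_n$. The plan is to test subharmonicity on carefully chosen disks, exploiting the freedom to prescribe jets of $f$ at a point. Subharmonicity of $\log|w_f|$ forces $\partial_{\bar t}\partial_t\log|w_f|\ge0$ where $w_f\neq0$. Where $w_f\ne0$ we have
\[
\partial_t\partial_{\bar t}\log|w_f|=\mathrm{Re}\,\partial_t\bigl(\partial_{\bar t}w_f/w_f\bigr).
\]
By the chain rule, $\partial_{\bar t}$ hits only the $\Gamma(f)$ factors, through $\overline{f'}\,\dbar\Gamma$.

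\textbf{Step 2: normal form for the jets.} At $t=0$ choose $f'(0)=v$ and the higher derivatives freely, so that $F_1,\dots,F_n$ form a basis at $t=0$. The quantity $\partial_{\bar t}w_f/w_f$ then equals $\overline{v}^{\,l}$ times an expression linear in $\partial_{\bar l}\Gamma$, contracted against the $F_k$ and dual frames. The key term comes from $F_2$. The coefficient of $\partial_{\bar l}\Gamma^k_{ij}v^iv^j$ projected along $F_2$ is killed in the wedge, but its components along $F_3,\dots,F_n$ are not.

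Since $n\ge2$ and the jets are free, we can make the leading contribution to $\partial_t\partial_{\bar t}\log|w_f|$ contain a term of the form $\mathrm{Re}(c\,\lambda)$. Here $\lambda$ is a free complex parameter, coming from an independent higher jet, and $c$ is a component of
\[
\partial_{\bar l}\Gamma^k_{ij}v^iv^j\ \text{ modulo } v \text{ and the span of the relevant } F\text{'s}.
\]
This term sits alongside bounded terms. Subharmonicity for all $\lambda$ forces $c=0$. Ranging over all choices, we get the condition that $\dbar S^k_{ij}v^iv^j$ is proportional to $v^k$ for all $v$, i.e. that the symmetric tensor $\dbar S(v,v)\wedge v=0$.

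\textbf{Step 3: algebraic lemma.} A symmetric $(1,2)$-tensor $A$ with $A(v,v)\parallel v$ for all $v$ is of pure-trace form $A(u,v)=\alpha(u)v+\alpha(v)u$, with $\alpha=\frac1{n+1}\operatorname{tr}A$. This holds for $n\ge2$: $A(v,v)=\phi(v)v$ with $\phi$ forced linear by polarization. Applying this with $A=\partial_{\bar l}S$ for each $l$ gives exactly
\[
\dbar\Pi^k_{ij}=0
\]
by \eqref{eq:Pi}, since pure-trace terms drop out of $\Pi$.

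\textbf{Torsion.} For an arbitrary $\nabla$, torsion contributions appear in $F_k$, $k\ge3$, through terms $\Gamma^k_{ij}f'^iF^j$ with $F\ne f'$. However, the decisive test term in Step 2 involves only the $v,v$-slot, which sees $S$ alone. So the same argument applies verbatim to $\nabla^S$.

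\textbf{Main obstacle.} The hardest part is Step 2: isolating a term linear in a free parameter inside $\partial_t\partial_{\bar t}\log|w_f|$, with all other terms bounded independently of it. I would first try $n=2$ explicitly, where $w_f=\det(f',F_2)$ and the computation is transparent, and then embed the curve so that only two directions are active at the point, with the rest chosen as fixed generic jets.
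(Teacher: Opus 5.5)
Your outline follows the paper's strategy: test $\operatorname{Re}\partial_t(w_{\bar t}/w)\ge0$ on polynomial germs, produce a term real-linear in a free jet, kill its coefficient by scaling, get $\dbar S(v,v)\parallel v$, then use the pure-trace lemma. But the step that carries the content, your Step~2, is not supplied, and what you say about it is inaccurate.

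\textbf{The free parameter is never specified.} It must be the ordinary jet $J_{n+1}=f^{(n+1)}(0)$. By the triangular structure $F_r=j_r+(\text{terms in }j_1,\dots,j_{r-1})$, shifting $J_{n+1}$ by $q$ leaves $F_1(0),\dots,F_n(0)$, every $D_j\coloneqq\partial_{\bar t}F_j(0)$, and $\partial_t\partial_{\bar t}F_n(0)$ unchanged. It adds exactly $q$ to $\partial_tF_n(0)$. Hence $w(0)$ and $w_{\bar t}(0)$ are fixed, and the entire $q$-dependence of $\partial_t(w_{\bar t}/w)(0)$ is
\[
 \frac{\bigl(\partial_{\bar t}\mathcal U(0)-\lambda U_0\bigr)\wedge q}{w(0)},\qquad
 \mathcal U\coloneqq F_1\wedge\cdots\wedge F_{n-1},\quad U_0\coloneqq\mathcal U(0),\quad
 \lambda\coloneqq\frac{w_{\bar t}(0)}{w(0)} .
\]
A lower jet does not give this clean structure. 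For $n\ge3$, varying e.g.\ $J_3$ moves $E_3=F_3(0)$, hence $w(0)$ in the denominators, and enters later $D_j$ nonlinearly.

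\textbf{Your fallback does not work.} Keeping ``only two directions active'' is impossible, since $w(0)\ne0$ forces all $n$ covariant jets to be independent. Moreover $D_3,\dots,D_n$, which contain torsion and mixed second derivatives of $\Gamma$, all enter $w_{\bar t}$ and $w_{t\bar t}$.

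\textbf{Your component bookkeeping is reversed.} With $(\epsilon^j)$ dual to $(E_j)$, one has $w_{\bar t}(0)/w(0)=\sum_j\epsilon^j(D_j)$, so the wedge keeps only the $E_2$-component of $D_2$. In the $q$-coefficient above, that component cancels against $\lambda U_0$, and the components along $E_1,E_3,\dots,E_{n-1}$ die in the wedge. What survives is the component of $D_2$ transverse to $H\coloneqq\Span(E_1,\dots,E_{n-1})$.

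\textbf{How the paper finishes.} Scaling and nondegeneracy of $B\mapsto B\wedge q$ on $\bigwedge^{n-1}V$ give $\partial_{\bar t}\mathcal U(0)\in\C U_0$. Expanding by Leibniz and writing $D_j=D_j^H+c_jN$ with $N\notin H$, the $(n-1)$-vectors with $N$ in slot $j$ are independent of one another and of $U_0$. So each $c_j=0$ separately, i.e.\ $D_j\in H$ for every $j\le n-1$.

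\textbf{Your torsion remark needs this decoupling.} Positivity is assumed for $\nabla$, not $\nabla^S$, and for $n\ge3$ their Wronskians differ, since $F_3$ picks up $\tfrac12T(f',F_2)$. So the argument cannot be run ``verbatim for $\nabla^S$''. You must run it for $\nabla$ and isolate the single condition $D_2=(\dbar S)(\bar v;v,v)\in H$, separated from the torsion-laden $D_3,\dots,D_{n-1}$. Letting $H$ range over all hyperplanes through $v$ then gives $C_{\bar v}(v,v)\in\C v$, where $C_{\bar l}\coloneqq(\partial S/\partial\bar z^l)(p)$ and $C_{\bar v}\coloneqq\sum_l\overline{v^l}C_{\bar l}$.

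\textbf{A missing step before the pure-trace lemma.} This condition is contracted with $\bar v$, so you cannot yet apply the lemma to each $\partial_{\bar l}S$. You need that $\sum_l\overline{v^l}\,v\wedge C_{\bar l}(v,v)=0$, viewed as a polynomial identity in independent $v,\bar v$, forces $v\wedge C_{\bar l}(v,v)=0$ for each $l$.
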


Together with~\cite[Corollaries~3.5--3.6]{JahnkeRadloff2015}, this gives the
classification in the general type case.

\begin{theorem}\label{thm:general-type}
Let $X$ be a smooth projective manifold of general type.  Then the following
conditions are equivalent:
\begin{enumerate}
\item $X$ admits a smooth connection with Wronskian curvature positivity.
\item $X$ admits a holomorphic projective connection.
\item $X$ is a compact ball quotient.
\end{enumerate}
\end{theorem}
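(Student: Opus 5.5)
The plan is to derive Theorem~\ref{thm:general-type} from Theorem~\ref{thm:existence-equivalence} together with the cited results of Jahnke--Radloff. No new Wronskian computation should be needed.

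\emph{(1)$\Leftrightarrow$(2).} This is exactly the equivalence (a)$\Leftrightarrow$(c) of Theorem~\ref{thm:existence-equivalence}, applied to $X$. That theorem makes no compactness or K\"ahler assumption, so it applies directly to a projective manifold.

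\emph{(2)$\Rightarrow$(3).} We invoke \cite[Corollaries~3.5--3.6]{JahnkeRadloff2015}. For compact K\"ahler (in particular projective) manifolds, they show that a manifold of general type carrying a holomorphic normal projective connection is a ball quotient. The one point needing care is the dictionary between definitions. Our ``holomorphic projective connection'' is a holomorphic projective class, given locally by torsion-free holomorphic connections whose differences have the pure-trace form \eqref{eq:projective-change}. We must check that this is the notion in \cite[Definition~1.12]{JahnkeRadloff2015}, which is formulated through Atiyah classes.

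To do this, we choose local holomorphic representatives $D^{(\alpha)}$ on a cover. Their differences give a holomorphic \v{C}ech cocycle with values in $\Sym^2T_X^*\otimes T_X$. This cocycle is pure trace modulo the normalized projective Atiyah class. Equivalently, the projective Christoffel symbols $\Pi$ of \eqref{eq:Pi} glue with holomorphic inhomogeneous transition terms, which is precisely the trace-free part of the Atiyah class splitting. I expect this identification to be the main obstacle, although it is standard and essentially recorded after \cite[Definition~1.12]{JahnkeRadloff2015}. I would handle it by writing out the cocycle explicitly and matching it with their definition.

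\emph{(3)$\Rightarrow$(2).} Let $X=\mathbb B^n/\Gamma$. The ball $\mathbb B^n\subset\C^n\subset\Pj^n$ carries the flat projective structure induced from $\Pj^n$. In affine coordinates it is represented by the trivial connection $\Gamma^k_{ij}=0$, which is holomorphic. The group $\mathrm{PU}(n,1)\subset \mathrm{PGL}(n+1,\C)$ acts by projective linear transformations. Such transformations send straight-line geodesics to straight lines, so they preserve the projective class, and their transition terms are holomorphic and pure trace. Hence the class descends to $X$, giving (2). Alternatively, one may again cite \cite{JahnkeRadloff2015} for this direction. Combining the three implications proves the theorem.
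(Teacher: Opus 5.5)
Your proposal is correct and has the same skeleton as the paper. (1)$\Leftrightarrow$(2) is Theorem~\ref{thm:existence-equivalence}, and (2)$\Rightarrow$(3) is read off from \cite[Corollaries~3.5--3.6]{JahnkeRadloff2015}.

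In (2)$\Rightarrow$(3), however, the paper spells out the chain that your one-line summary of Jahnke--Radloff hides, and you should do the same. It is the only place where the general-type hypothesis enters. Since $K_X$ is big, $X\not\simeq\Pj^n$. Then \cite[Corollary~3.5]{JahnkeRadloff2015} gives $K_X$ nef, and \cite[Corollary~3.6(1)]{JahnkeRadloff2015}, applied to the big and nef $K_X$, gives a ball quotient. As the paper uses them, these corollaries concern projective manifolds, so your phrase ``for compact K\"ahler manifolds'' overstates their scope, though harmlessly here.

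You single out the dictionary with the Atiyah-class definition as the main obstacle; the paper simply asserts it as standard. If you write it out, be precise about which cocycle is which. The cocycle $D^{(\alpha)}-D^{(\beta)}$ is exactly pure trace. It is the Atiyah cocycle (differences of the coordinate-flat connections) that agrees with it up to the \v{C}ech coboundary of the local Christoffel tensors.

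The genuine difference is in (3)$\Rightarrow$(2): the paper cites \cite[Theorem~0.1]{JahnkeRadloff2015}, whereas you argue directly. Your argument is sound. Let $\gamma\in\mathrm{PU}(n,1)$ act by a linear-fractional map with affine denominator $\lambda_\gamma$, which does not vanish on $\mathbb B^n$. One computes $\gamma^*D_{\mathrm{flat}}-D_{\mathrm{flat}}=\alpha\odot\operatorname{id}$ with $\alpha=-d\log\lambda_\gamma$ holomorphic. Alternatively, your ``lines go to lines'' remark becomes rigorous by applying Lemma~\ref{lem:pure-trace} to this difference tensor.

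Your route is self-contained and produces a holomorphic projective connection directly in the paper's local sense, so that direction needs no Atiyah-class translation. The citation is shorter and also gives flatness, which the theorem does not need.
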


In the compact K\"ahler--Einstein case, Theorem~\ref{thm:existence-equivalence}
and a theorem of Jahnke and Radloff~\cite[Theorem~0.1]{JahnkeRadloff2015}
give the three possibilities
\[
\mathbb P^n,\qquad
\text{a finite \'{e}tale quotient of a complex torus},\qquad
\text{or a compact ball quotient}.
\]
Outside the K\"ahler--Einstein setting, Jahnke and Radloff construct flat
holomorphic projective connections on certain modular abelian families over
quaternionic Shimura curves~\cite[Theorem~0.2]{JahnkeRadloff2015}.
For smooth projective manifolds of general type,
Theorem~\ref{thm:general-type} confines an approach to the Green--Griffiths
conjecture through Wronskian curvature positivity to compact ball quotients.

The Fermat quintic surface $X\subset\mathbb P^3$ was one of the motivating
examples.  Using the methods developed in~\cite{HHMX2025, HHMX2026}, one can
show that $H^0(X,\,E_{2,3}T^*X)=0$, where $E_{2,3}T^*X$ denotes the bundle of
invariant $2$-jet differentials of weighted degree~$3$.  If $X$ admitted a
connection with Wronskian curvature positivity, Noguchi's
Theorem~\ref{thm:noguchi} with $D=\emptyset$ and the ampleness of
$K_X\simeq\mathcal O_X(1)$ would imply $W(\nabla,f)\equiv0$ for every entire
curve $f\colon\mathbb C\to X$.  On a surface this says only
$f'\wedge\nabla_{f'}f'=0$: the acceleration is proportional to the velocity
away from critical points.

To prove Theorem~\ref{thm:torsion-forward}, we fix a point and choose
holomorphic coordinates in which the symmetric Christoffel symbols vanish
there.  The covariant jets of a holomorphic curve depend triangularly on
its ordinary coordinate jets, with the highest ordinary jet occurring
with coefficient one.  Keeping the lower covariant jets fixed, we vary
the next ordinary jet by an arbitrary vector $q$.  Subharmonicity for
every magnitude and phase of $q$ forces the coefficient of the resulting
real-linear highest-jet term to vanish.  In dimension two this gives
\[
(\dbar S)_{\bar v}(v,v)\in\C v,
\]
while in higher dimension the same conclusion follows from the variation
of an $(n-1)$-fold wedge.  The pure-trace lemma then gives $\dbar\Pi=0$.

Conversely, projectively equivalent torsion-free connections produce
covariant jets related by a triangular transformation with diagonal
entries equal to one, so their Wronskians coincide exactly.  A local
holomorphic representative therefore gives a holomorphic Wronskian.
A partition of unity combines these representatives into a global
smooth connection in the same projective class, satisfying Wronskian
curvature positivity.

Sections~\ref{sec:surface-proof} and~\ref{sec:higher-proof} prove the forward
implication, and Section~\ref{sec:converse} proves the converse.
Section~\ref{sect: 6} establishes Theorem~\ref{thm:general-type}, the
K\"ahler--Einstein trichotomy (Theorem~\ref{cor:KE}), and a Chern-class
obstruction for smooth nonlinear projective hypersurfaces.

\section{Preliminaries}\label{sect: 2}

\subsection{The local Laplacian formula}\label{sec:localmeaning}
For a local holomorphic curve $f:\Delta\to X$, set
\begin{equation}
    \label{F_i definition}
 F_1\coloneqq f',\qquad F_{r+1}\coloneqq\nabla_{f'}F_r\quad(r\geq1).
\end{equation}
The Wronskian is
\[
 W(\nabla,f)=F_1\wedge\cdots\wedge F_n\in f^*K_X^*.
\]
On a sufficiently small subdisk whose image lies in a holomorphic trivializing
neighbourhood of $K_X^*$, write $W(\nabla,f)=w_f\varepsilon$ in a holomorphic
frame $\varepsilon$.  The coefficient $w_f$ is smooth, and wherever $w_f\neq0$,
\begin{equation}\label{eq:log-derivative}
 \partial_t\partial_{\bar t}\log|w_f|^2
 =2\operatorname{Re}\left(
 \frac{(w_f)_{t\bar t}}{w_f}
 -\frac{(w_f)_t(w_f)_{\bar t}}{w_f^2}
 \right).
\end{equation}
Since $\Delta=4\partial_t\partial_{\bar t}$ and
$\log|w_f|^2=2\log|w_f|$, subharmonicity of $\log|w_f|$ implies that
\eqref{eq:log-derivative} is nonnegative.  The forward implication uses only
this inequality at points where $w_f\neq0$.

\subsection{Local holomorphic representatives}
\begin{lemma}\label{lem:local-holomorphic-representative}
A projective class of torsion-free smooth connections is holomorphic if and only if it admits, locally near every point, a torsion-free holomorphic representative.
\end{lemma}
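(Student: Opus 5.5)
The \emph{if} direction is immediate. Suppose that near a point $p$ there is a torsion-free holomorphic connection $D$ in the given projective class, with holomorphic coefficients $S^k_{ij}$. Its projective Christoffel symbols \eqref{eq:Pi} are polynomial in the $S^k_{ij}$, hence holomorphic. Any other representative $\nabla$ of the class differs from $D$ locally by a pure-trace term \eqref{eq:projective-change}. As noted after \eqref{eq:Pi}, such a term does not change $\Pi$. So $\Pi^k_{ij}(\nabla)=\Pi^k_{ij}(D)$, and therefore $\dbar\Pi=0$ near $p$. Since $p$ is arbitrary and holomorphicity of the class is coordinate-independent, the class is holomorphic.

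For the \emph{only if} direction, fix a point and a holomorphic chart $U$. Let $\nabla$ be a torsion-free representative with coefficients $S^k_{ij}=S^k_{ji}$, and assume $\dbar\Pi^k_{ij}=0$ on $U$. The natural candidate is the connection whose Christoffel symbols are the $\Pi^k_{ij}$ themselves. I will check three things about it.

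First, it is a connection on $U$. Any chart-wise choice of symmetric functions $\Gamma^k_{ij}$ defines a torsion-free connection on the chart via $D_{\partial_i}\partial_j=\Gamma^k_{ij}\partial_k$, extended by the Leibniz rule and $C^\infty$-linearity in the first slot. Since $\Pi^k_{ij}$ is symmetric in $i,j$, this $D$ is torsion-free.

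Second, it lies in the projective class. By \eqref{eq:Pi},
\[
\Pi^k_{ij}=S^k_{ij}+\delta^k_i\alpha_j+\delta^k_j\alpha_i,\qquad \alpha_j\coloneqq-\tfrac1{n+1}S^a_{aj},
\]
which is exactly the form \eqref{eq:projective-change} with the smooth $(1,0)$-form $\alpha=\alpha_j\,dz^j$. Hence $D$ is projectively equivalent to $\nabla$ on $U$.

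Third, it is holomorphic. Its coefficients $\Pi^k_{ij}$ satisfy $\dbar\Pi^k_{ij}=0$ by hypothesis, so $D$ is a torsion-free holomorphic representative on $U$.

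There is essentially no obstacle here. The only point needing care is that "holomorphic connection" means holomorphic Christoffel symbols in holomorphic coordinates, so that $D$ maps holomorphic vector fields to holomorphic vector fields. One should also remember that the local representative is allowed to depend on the chart, since $\Pi$ is not tensorial and so $D$ need not glue globally. The lemma asks only for local representatives, which is exactly what this construction provides.
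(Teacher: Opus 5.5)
Your proof is correct and follows the paper's argument essentially verbatim. The paper also takes the chart-wise connection with coefficients $\Pi^k_{ij}$, observes that it is symmetric and holomorphic, and places it in the class via the pure-trace difference $S^k_{ij}-\Pi^k_{ij}=\frac1{n+1}\bigl(\delta_i^k S^a_{aj}+\delta_j^k S^a_{ai}\bigr)$; the ``if'' direction is the invariance of $\Pi$ under pure-trace changes, as you argue (the paper's extra remark that $\Pi^a_{aj}=0$ is not needed for your version).
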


\begin{proof}
A holomorphic representative has holomorphic projective Christoffel symbols.
Conversely, if $\dbar\Pi=0$ in a holomorphic chart, let $D$ have coefficients
$D^k_{ij}\coloneqq\Pi^k_{ij}$.  These are symmetric and holomorphic, so $D$ is a
torsion-free holomorphic connection.  Since $\Pi^a_{aj}=0$, its projective
Christoffel symbols are $\Pi$.  Equivalently, for the original connection $S$,
\[
 S^k_{ij}-D^k_{ij}
 =\frac1{n+1}\bigl(\delta_i^k S^a_{aj}+\delta_j^k S^a_{ai}\bigr),
\]
so $D$ belongs to the given projective class.
\end{proof}

\subsection{Normal coordinates for the symmetric part}
\begin{lemma}\label{lem:normal}
Let $p\in X$. There exist local holomorphic coordinates $(u^1,\dots,u^n)$ centered at $p$ such that
\[
 S^k_{ij}(p)=0
\]
for all $i,j,k$. In particular, if the connection is torsion-free, this means $\Gamma^k_{ij}(p)=0$ for all $i,j,k$.
\end{lemma}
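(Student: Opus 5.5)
Starting from any holomorphic coordinates $(z^1,\dots,z^n)$ centered at $p$, I will make a quadratic holomorphic change of coordinates
\[
 z^k = u^k - \tfrac12 c^k_{ij}\,u^iu^j,\qquad c^k_{ij}=c^k_{ji}\in\C \text{ constants},
\]
and choose the constants $c^k_{ij}$ to cancel $S^k_{ij}(p)$. Since the Jacobian at $p$ is the identity, this is a biholomorphism near $p$ by the holomorphic inverse function theorem. Only the transformation law of the connection coefficients under holomorphic coordinate changes is needed, and the underlying point is that $\nabla$ acts on holomorphic frames like an ordinary affine connection.

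Key steps in order. First, derive the transformation law. Write $\partial_{u^i}=\frac{\partial z^a}{\partial u^i}\partial_{z^a}$, which involves only holomorphic Jacobian entries. Then the Leibniz rule (with $U(a)$ for $U=\partial_{u^i}$ acting on the holomorphic Jacobian entries, so only holomorphic derivatives appear) gives
\[
 \widetilde\Gamma^k_{ij}=\frac{\partial u^k}{\partial z^c}\Bigl(\frac{\partial^2 z^c}{\partial u^i\partial u^j}+\frac{\partial z^a}{\partial u^i}\frac{\partial z^b}{\partial u^j}\Gamma^c_{ab}\Bigr).
\]
Second, symmetrize in $i,j$. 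The inhomogeneous term $\partial^2 z^c/\partial u^i\partial u^j$ is already symmetric, so the same law holds with $\Gamma$ replaced by $S$. This also follows from the fact that $\nabla^S$ is a globally defined connection, so its coefficients obey the connection transformation law. Third, evaluate at $p$. There the Jacobians are the identity and $\partial^2 z^c/\partial u^i\partial u^j=-c^c_{ij}$, so $\widetilde S^k_{ij}(p)=S^k_{ij}(p)-c^k_{ij}$. Choosing $c^k_{ij}\coloneqq S^k_{ij}(p)$, which is legitimate precisely because $S$ is symmetric in $i,j$, gives $\widetilde S^k_{ij}(p)=0$.

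For the last assertion, if $\nabla$ is torsion-free then $\Gamma^k_{ij}=\Gamma^k_{ji}$ in any holomorphic chart, since $[\partial_i,\partial_j]=0$. Hence $S=\Gamma$, and $\Gamma^k_{ij}(p)=0$.

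The only point needing care is the first step. One must confirm that, for a Noguchi connection, differentiating the holomorphic Jacobian entries along $\partial_{u^i}$ yields exactly the holomorphic second derivatives with no antiholomorphic contributions. This holds because $\partial_{u^i}$ is a $(1,0)$ vector field and the entries are holomorphic. Everything else is routine linear algebra.
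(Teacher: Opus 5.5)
Your proposal is correct and matches the paper's proof: the same quadratic holomorphic change $z^k=u^k-\tfrac12 S^k_{ij}(p)\,u^iu^j$, inverted by the inverse function theorem, followed by the transformation law of the symmetric coefficients evaluated at $p$. The only difference is that you derive that transformation law and check that only holomorphic second derivatives of the Jacobian appear, which the paper takes for granted.
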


\begin{proof}
Starting with holomorphic coordinates $z$ centered at $p$, consider the change of variables
\[
 z^k=u^k-\frac12 S^k_{ij}(p)u^i u^j.
\]
The Jacobian at $0$ is the identity, so the holomorphic inverse function theorem
gives local coordinates $u$.  The transformation law for the symmetric
connection coefficients, evaluated at $p$, gives
\[
 \widetilde S^k_{ij}(p)
 =S^k_{ij}(p)+\frac{\partial^2z^k}{\partial u^i\partial u^j}(0)=0.
\]
\end{proof}

\begin{remark}\label{rem:no-second-change}
The normalization $S(p)=0$ is pointwise: the derivatives of $S$ at $p$ may be
nonzero, and no local flatness is implied.  We keep these coordinates fixed
throughout the proof and prescribe the jets of test curves without further
coordinate changes.
\end{remark}

\subsection{Prescribing covariant jets}
\begin{lemma}\label{lem:triangular-jets}
Fix a point $p\in X$ and local holomorphic coordinates centered at $p$.
For a holomorphic curve germ $f:(\Delta,0)\to(X,p)$, write
\[
 j_r(t)\coloneqq\left(\frac{d^rf^1}{dt^r}(t),\dots,
                   \frac{d^rf^n}{dt^r}(t)\right),
 \qquad J_r(f)\coloneqq j_r(0)\in T_pX\quad(r\ge1).
\]
Set $F_1\coloneqq f'$ and $F_{r+1}\coloneqq\nabla_{f'}F_r$.  For every $r\ge1$, there are
smooth expressions $P_r$ and $Q_r$, determined by the connection and its
finite jets in the chosen coordinates, such that
\begin{align}
 F_r(0) &= J_r(f)+P_r\bigl(J_1(f),\dots,J_{r-1}(f)\bigr),
 \label{eq:triangular-Fr}\\
 \partial_tF_r(0) &= J_{r+1}(f)+Q_r\bigl(J_1(f),\dots,J_r(f)\bigr).
 \label{eq:triangular-dFr}
\end{align}
Thus the highest ordinary jet in each formula occurs with coefficient one.
In particular, any prescribed vectors $E_1,\dots,E_N\in T_pX$ can be
realized as $F_r(0)=E_r$ for $1\le r\le N$ by a holomorphic polynomial
curve germ.

Fix $m\ge1$.  If two holomorphic curve germs through $p$ satisfy
\[
 J_r(\widetilde f)=J_r(f)\quad(1\le r\le m),\qquad
 J_{m+1}(\widetilde f)=J_{m+1}(f)+q,
 \quad q\in T_pX,
\]
then
\begin{align*}
 \widetilde F_r(0)&=F_r(0) &&(1\le r\le m),\\
 \partial_t\widetilde F_r(0)&=\partial_tF_r(0) &&(1\le r<m),\\
 \partial_t\widetilde F_m(0)&=\partial_tF_m(0)+q.
\end{align*}
The antiholomorphic and mixed derivatives also remain unchanged:
\[
 \partial_{\bar t}\widetilde F_r(0)=\partial_{\bar t}F_r(0)
 \quad(1\le r\le m),\qquad
 \partial_t\partial_{\bar t}\widetilde F_m(0)
 =\partial_t\partial_{\bar t}F_m(0).
\]
\end{lemma}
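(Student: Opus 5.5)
The proof is an induction on $r$, carried out in the fixed coordinates centred at $p$. Write $\Gamma^k_{ij}$ for the (smooth, possibly non-symmetric) coefficients of $\nabla$. For a section $V=V^k\partial_k$ of $f^*T_X$, the connection acts along the curve by
\[
 \nabla_{f'}V=\Bigl(\partial_tV^k+\Gamma^k_{ij}(f)\,(f^i)'\,V^j\Bigr)\partial_k ,
\]
since $f'$ is of type $(1,0)$ and the $\bar t$-derivative does not enter. Thus $F_{r+1}=\partial_tF_r+\Gamma(f)(f',F_r)$.

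For the inductive claim I would prove something stronger than \eqref{eq:triangular-Fr} and \eqref{eq:triangular-dFr}. Namely, on a neighbourhood of $0$ one has
\[
 F_r(t)=j_r(t)+\mathcal P_r\bigl(f(t),\overline{f(t)};\,j_1(t),\dots,j_{r-1}(t)\bigr),
\]
where $\mathcal P_r$ is polynomial in the jets. Its coefficients are smooth functions on the chart built from $\Gamma$ and its holomorphic derivatives $\partial_z^\alpha\Gamma$ up to order $r-2$.

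The key point is the chain rule for the coefficient functions. For $g$ smooth on the chart we have $\partial_t\,g(f)=(\partial_{z^i}g)(f)\,(f^i)'$, because $f$ is holomorphic, so $\overline{f}$ is antiholomorphic and contributes nothing to $\partial_t$. Hence differentiating $\mathcal P_r$ in $t$ only uses $j_1,\dots,j_r$. The term $\Gamma(f)(f',F_r)$ also involves only $j_1,\dots,j_r$. This gives the inductive step, and evaluating at $0$ yields \eqref{eq:triangular-Fr} and \eqref{eq:triangular-dFr}, with $Q_r$ the $t$-derivative of $\mathcal P_r$.

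Prescription of the $F_r(0)$ then follows by back substitution. Set $J_1=E_1$ and $J_r=E_r-P_r(J_1,\dots,J_{r-1})$, and take the polynomial curve $f(t)=\sum_{r\le N}J_r t^r/r!$.

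For the comparison statement, the equalities $\widetilde F_r(0)=F_r(0)$ for $r\le m$ and $\partial_t\widetilde F_r(0)=\partial_tF_r(0)$ for $r<m$ are immediate: these quantities depend only on $J_1,\dots,J_m$. For $\partial_t\widetilde F_m(0)$, the formula \eqref{eq:triangular-dFr} has $J_{m+1}$ entering with coefficient one and $Q_m$ unchanged, which gives the shift by $q$.

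The antiholomorphic and mixed derivatives are the step I expect to need the most care. Apply $\partial_{\bar t}$ to the expression for $F_r$. Since the $j_s(t)$ are holomorphic, only the coefficient functions get differentiated, via $\partial_{\bar t}g(f)=(\partial_{\bar z^i}g)(f)\,\overline{(f^i)'}$. So $\partial_{\bar t}F_r(0)$ is a function of $J_1,\dots,J_{r-1}$ and $\overline{J_1}$ only, hence unchanged for $r\le m$.

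For $\partial_t\partial_{\bar t}F_m$, I would write it as $\partial_{\bar t}$ of $\partial_tF_m=j_{m+1}+\mathcal Q_m(f,\bar f;j_1,\dots,j_m)$. The term $j_{m+1}$ is holomorphic and is killed by $\partial_{\bar t}$. What remains involves $\partial_{\bar z}$-derivatives of the coefficients, polynomials in $j_1,\dots,j_m$, and $\overline{j_1}$. At $0$ this depends only on $J_1,\dots,J_m$, so the mixed derivative is unchanged, as claimed.
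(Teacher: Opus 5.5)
Your proposal is correct and follows essentially the same route as the paper: induction via $F_{r+1}=\partial_tF_r+\Gamma(f)(f',F_r)$, back substitution to prescribe the $F_r(0)$, and tracking which ordinary jets survive, using that $\partial_{\bar t}$ only hits the coefficient functions (producing $\overline{j_1}$). The differences are only in detail: you spell out the chain-rule fact that $\partial_t g(f)$ involves only $\partial_z g$, and you compute the mixed derivative as $\partial_{\bar t}(\partial_tF_m)$ rather than $\partial_t(\partial_{\bar t}F_m)$.
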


\begin{proof}
The identities $F_1=j_1$ and
\[
 F_{r+1}=\partial_tF_r+\Gamma(f)(j_1,F_r)
\]
show inductively that $F_r$ is $j_r$ plus an expression involving only
$j_1,\dots,j_{r-1}$ and derivatives of the connection coefficients at
$f(t)$.  Indeed, $\partial_tj_r=j_{r+1}$, while all remaining terms in
$F_{r+1}$ use jets of order at most $r$.  Evaluating at $0$ gives
\eqref{eq:triangular-Fr}; differentiating once gives
\eqref{eq:triangular-dFr}.

To prescribe the covariant jets, choose the ordinary jets recursively by
\[
 A_1\coloneqq E_1,\qquad A_r\coloneqq E_r-P_r(A_1,\dots,A_{r-1})\quad(2\le r\le N).
\]
A holomorphic polynomial curve with $J_r(f)=A_r$ then has $F_r(0)=E_r$.
The assertions about $F_r(0)$ and $\partial_tF_r(0)$ under variation of
$J_{m+1}$ follow directly from \eqref{eq:triangular-Fr} and
\eqref{eq:triangular-dFr}.

Finally, all $j_s$ are holomorphic.  In the lower-order expression for
$F_r$, the derivative $\partial_{\bar t}$ therefore acts only on the
connection coefficients and their derivatives, introducing
$\overline{j_1}$.  Hence $\partial_{\bar t}F_r(0)$ for $r\le m$ depends
only on $J_1,\dots,J_r$.  A further $\partial_t$-derivative raises the
order of an ordinary jet by at most one and annihilates
$\overline{j_1}$.  Since the lower-order part of $F_m$ uses only
$j_1,\dots,j_{m-1}$, the mixed derivative
$\partial_t\partial_{\bar t}F_m(0)$ depends only on $J_1,\dots,J_m$.
Neither derivative involves the free jet $J_{m+1}$.
\end{proof}

\subsection{Three linear-algebra lemmas}

\begin{lemma}\label{lem:scaling}
Let $V$ be a complex vector space, $L:V\to\C$ a complex linear functional, and $c\in\R$. If
\[
c+2\operatorname{Re}L(q)\ge 0
\]
for all $q\in V$, then $L=0$.
\end{lemma}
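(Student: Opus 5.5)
The plan is to argue by contradiction, using a scaling substitution in $q$ to turn the affine inequality into one that forces $\operatorname{Re}L$ to vanish. Suppose $L\neq 0$. Since $L$ is a nonzero complex linear functional $V\to\C$, it is surjective. So there is $q_0\in V$ with $L(q_0)=1$.

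Next we evaluate the hypothesis along the real ray $q=s\,q_0$ with $s\in\R$. Complex linearity gives $L(sq_0)=s$, so the inequality becomes
\[
 c+2s\ge 0\qquad\text{for all } s\in\R .
\]
Choosing $s<-c/2$, for instance $s=-c/2-1$, gives $c+2s=-2<0$. This contradicts the hypothesis, and hence $L=0$.

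An equivalent viewpoint: $q\mapsto 2\operatorname{Re}L(q)$ is a real linear functional bounded below by $-c$ on all of $V$. Replacing $q$ by $-q$ shows it is also bounded above, and replacing $q$ by $\lambda q$ with $\lambda>0$ large shows it must vanish identically. Once $\operatorname{Re}L\equiv 0$, complex linearity gives $\operatorname{Im}L(q)=-\operatorname{Re}L(iq)=0$, so $L=0$.

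There is no real obstacle here. The only point needing care is that the hypothesis bounds just the real part. The passage from $\operatorname{Re}L=0$ to $L=0$ relies on complex linearity, through the rotation $q\mapsto iq$ or through the choice of $q_0$ with $L(q_0)$ real. This is precisely the ``every magnitude and phase of $q$'' mechanism the paper will use when applying the lemma to the highest-jet term in \eqref{eq:log-derivative}.
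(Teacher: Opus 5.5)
Your proof is correct and is essentially the paper's argument. The paper also takes $q_0$ with $L(q_0)\neq0$ and uses complex multiples of $q_0$ to make $\operatorname{Re}L(q)$ arbitrarily negative; your normalization $L(q_0)=1$ followed by the real ray $s\,q_0$ does exactly this.
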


\begin{proof}
If $L(q_0)\neq0$, complex multiples of $q_0$ make
$\operatorname{Re}L(q)$ arbitrarily negative, contradicting the inequality.
\end{proof}

\begin{lemma}\label{lem:wedge-nondegenerate}
Let $V$ be a finite-dimensional complex vector space of dimension $m\ge n$ and $B\in\bigwedge^{n-1}V$.
If $B\wedge q = 0$ for every $q\in V$, then $B=0$.
\end{lemma}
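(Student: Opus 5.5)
The proof is a basis computation. Choose a basis $e_1,\dots,e_m$ of $V$. Then the $(n-1)$-vectors
\[
 e_I\coloneqq e_{i_1}\wedge\cdots\wedge e_{i_{n-1}},\qquad I=\{i_1<\dots<i_{n-1}\},
\]
form a basis of $\bigwedge^{n-1}V$. Expand $B=\sum_I b_I e_I$; the task is to show that every coefficient $b_I$ vanishes.

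Fix a multi-index $I$. Since $m\ge n>n-1$, there is an index $j\notin I$, and we take $q\coloneqq e_j$. For each $J$ with $j\in J$ we have $e_J\wedge e_j=0$. For each $J$ with $j\notin J$,
\[
 e_J\wedge e_j=\pm e_{J\cup\{j\}},
\]
and these $n$-vectors are basis elements of $\bigwedge^nV$.

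The assignment $J\mapsto J\cup\{j\}$ is injective on $\{J: j\notin J\}$, so distinct $J$ produce distinct basis elements. The hypothesis $B\wedge e_j=0$ therefore forces $b_J=0$ for every $J$ not containing $j$. In particular $b_I=0$. Since $I$ was arbitrary, $B=0$.

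The only point needing care is the dimension hypothesis $m\ge n$. It guarantees that $I$ has a complementary index $j$, and that $\bigwedge^nV\neq0$, so the wedges $e_{J\cup\{j\}}$ are genuinely linearly independent.
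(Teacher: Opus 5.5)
Your proof is correct and takes essentially the same approach as the paper: expand $B$ in the basis $e_I$, then wedge with basis vectors not indexed by $I$ to isolate the coefficient $b_I$. The only difference is minor. The paper wedges with all complementary vectors at once, so that only the $I$-term survives in $\bigwedge^m V$. You wedge with a single $e_j$ and use injectivity of $J\mapsto J\cup\{j\}$ to separate the surviving terms in $\bigwedge^n V$.
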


\begin{proof}
In a basis $e_1,\dots,e_m$, write $B=\sum_{|I|=n-1}B^I e_I$,
where $e_I\coloneqq\bigwedge_{i\in I}e_i$.
If $B^I\neq0$, let $J$ be the nonempty complement of $I$.
The hypothesis gives $B\wedge e_J=0$.  But every term of $B\wedge e_J$
except the one indexed by $I$ has a repeated basis vector, so
\[
 B\wedge e_J=\pm B^I e_1\wedge\cdots\wedge e_m\neq0,
\]
a contradiction.
\end{proof}

\begin{lemma}[Pure trace]\label{lem:pure-trace}
Let $V$ be a complex vector space of dimension $n\ge 2$ and
$B\in\Sym^2V^*\otimes V$.
If
\[
 B(v,v)\in\C v \qquad\text{for every } v\in V ,
\]
then there exists a unique linear form $\beta\in V^*$ such that
\begin{equation}\label{eq:pure-trace}
 B(u,v)=\beta(u)v+\beta(v)u \qquad\text{for all } u,v\in V .
\end{equation}
\end{lemma}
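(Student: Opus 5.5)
The plan is to first produce a candidate $\beta$ from a trace, and then show that the traceless remainder vanishes; uniqueness will come almost for free. Write $B(v,v)=\lambda(v)v$ for each nonzero $v$. The first task is to show that $\lambda$ extends to a linear form.

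\emph{Step 1: $\lambda$ is a homogeneous polynomial map.} Because $B(v,v)$ is quadratic and $v$ is linear, $\lambda(cv)=c\lambda(v)$. To see that $\lambda$ is polynomial, take a nonzero $v$ with coordinate $v^k\neq0$. Then $\lambda(v)=B^k(v,v)/v^k$. This expression does not depend on which $k$ with $v^k\ne0$ we choose, because $B(v,v)\wedge v=0$ gives $B^k(v,v)v^l=B^l(v,v)v^k$ for all $k,l$.

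\emph{Step 2: $\lambda$ is linear.} The identity
\[
B^k(v,v)\,v^l=B^l(v,v)\,v^k
\]
is an equality of cubic polynomials. Fix $k\neq l$; this is possible since $n\ge2$. The polynomial $v^l$ divides $B^l(v,v)\,v^k$. As $v^l$ is irreducible and coprime to $v^k$, it follows that $v^l$ divides $B^l(v,v)$. Write $B^l(v,v)=\beta_l(v)\,v^l$ with $\beta_l$ linear. Substituting back gives $\beta_l(v)\,v^kv^l=B^k(v,v)\,v^l$, so $B^k(v,v)=\beta_l(v)\,v^k$. Running the same argument with the roles of $k$ and $l$ swapped shows that $\beta_k=\beta_l=:2\beta$ is one common linear form. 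Hence
\[
B(v,v)=2\beta(v)\,v \quad\text{for all } v .
\]

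\emph{Step 3: polarize.} Both $B(u,v)$ and $\beta(u)v+\beta(v)u$ are symmetric bilinear, and they agree on the diagonal $u=v$. Polarization, using $B(u,v)=\tfrac12\bigl(B(u+v,u+v)-B(u,u)-B(v,v)\bigr)$, therefore gives \eqref{eq:pure-trace}.

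\emph{Uniqueness.} Suppose $\beta(u)v+\beta(v)u=0$ for all $u,v$. Take $u=v$: then $2\beta(v)v=0$, so $\beta(v)=0$ for every nonzero $v$, i.e.\ $\beta=0$. Equivalently, taking the trace of \eqref{eq:pure-trace} in $v$ recovers $(n+1)\beta(u)=\operatorname{tr}B(u,\cdot)$, which determines $\beta$ and matches the trace description in \eqref{eq:Pi}.

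The main obstacle is Step 2. Passing from the pointwise proportionality $B(v,v)\in\C v$ to a genuinely linear $\lambda$ needs the divisibility argument in the polynomial ring, and this is exactly where $n\ge2$ is used. For $n=1$ the hypothesis is vacuous, so the conclusion still holds trivially, but the argument above does not apply as written.
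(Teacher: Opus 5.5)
Your proof is correct, but it takes a different route from the paper's.

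\textbf{How the two arguments differ.} The paper argues pairwise in a basis. It writes $B(e_i,e_i)=\lambda_ie_i$ and applies the hypothesis to the vectors $e_i+te_j$, $t\in\C$. Setting $t=1$ gives $B(e_i,e_j)\in\Span(e_i,e_j)$. Comparing the two surviving coordinates as polynomials in the single variable $t$ then gives $B(e_i,e_j)=\tfrac{\lambda_j}{2}e_i+\tfrac{\lambda_i}{2}e_j$. So $\beta(e_i)=\lambda_i/2$, and bilinearity finishes. You instead treat $B(v,v)\wedge v=0$ as an identity in $\C[v^1,\dots,v^n]$. Primality of the coordinate forms $v^l$ shows that the proportionality factor is a single linear form, and polarization then recovers the off-diagonal values.

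\textbf{What each approach buys.} The paper's argument is shorter. It needs nothing beyond comparing coefficients in one variable, and it produces $\beta$ explicitly on a basis. Your argument separates ``the factor is linear'' from ``the diagonal determines a symmetric $B$.'' Its key step also extends verbatim to any homogeneous polynomial map $P$ of degree $d\ge1$ with $P(v)\in\C v$: for $n\ge2$ it gives $P(v)=\lambda(v)v$ with $\lambda$ homogeneous of degree $d-1$.

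\textbf{Two minor presentation points.} Step~1 does not actually establish polynomiality; it only shows $\lambda$ is a well-defined local quotient. It can be dropped, since Step~2 uses only the identity $B^k(v,v)v^l=B^l(v,v)v^k$. The swap in Step~2 is also unnecessary. Fixing $l$ and letting $k$ range over all indices $\neq l$ already gives $B(v,v)=\beta_l(v)\,v$, which covers the case $n\ge3$ at once.

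Your trace form of uniqueness, $(n+1)\beta=\operatorname{tr}B$, is the same relation the paper later uses as $C^a_{\bar m aj}=(n+1)\beta_{j\bar m}$.
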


\begin{proof}
Fix a basis $e_1,\dots,e_n$, and write $B(e_i,e_i)=\lambda_i e_i$.
For $i\neq j$ and $t\in\C$, the hypothesis gives
\begin{equation}\label{eq:pure-trace-expand}
\lambda_i e_i+2t B(e_i,e_j)+t^2\lambda_j e_j
=\mu(t)(e_i+te_j).
\end{equation}
Taking $t=1$ shows that $B(e_i,e_j)=a_{ij}e_i+a_{ji}e_j$.
Comparing the two coordinates in \eqref{eq:pure-trace-expand} and eliminating
$\mu(t)$ gives
\[
 2t a_{ji}+t^2\lambda_j=t(\lambda_i+2t a_{ij}).
\]
Thus $2a_{ji}=\lambda_i$ and $2a_{ij}=\lambda_j$.  With
$\beta(e_i)\coloneqq\lambda_i/2$, bilinearity yields \eqref{eq:pure-trace}.
Uniqueness follows by comparing the coefficients of any linearly independent
pair $u,v$ in that identity.
\end{proof}

\section{Positivity forces a holomorphic projective class in dimension two}\label{sec:surface-proof}

Let $X$ be a complex surface and $\nabla$ a smooth connection satisfying
Wronskian curvature positivity.  Since $T^\nabla(f',f')=0$, the two factors
$F_1=f'$ and $F_2=\nabla_{f'}f'$ in the Wronskian are unchanged by symmetrization.
We therefore work with the symmetric coefficients $S^k_{ij}$.

Fix $p\in X$ and choose normal coordinates as in Lemma~\ref{lem:normal}.  Set
\[
 C^k_{\bar m ij} \coloneqq \frac{\partial S^k_{ij}}{\partial\bar z^m}(p),
\qquad
 C_{\bar v}(v,v)^k \coloneqq C^k_{\bar m ij}\,\overline{v^m}v^iv^j \quad (v\in T_pX).
\]

Pick a holomorphic curve germ through $p$ such that
\[
 F_1(0)=v,\qquad F_2(0)=a,
\]
with $v$ and $a$ linearly independent.  In the local frame $\partial_1\wedge\partial_2$
of $K_X^*$ we write
\[
 W(\nabla,f)=w(t)\,\partial_1\wedge\partial_2 .
\]
Clearly $w(0)=\det(v,a)\neq0$.  Because $F_1=f'$ is holomorphic, $\partial_{\bar t}F_1(0)=0$.

To compute $\partial_{\bar t}F_2(0)$, recall that in coordinates
\[
 F_2^k = f''{}^k + S^k_{ij}(f)\,f'{}^i f'{}^j .
\]
The terms $f''{}^k$ and $f'{}^i f'{}^j$ are holomorphic, so their $\bar t$-derivatives vanish.
Moreover, $S(p)=0$ by our choice of normal coordinates; differentiating the second term
thus gives
\[
\partial_{\bar t}\bigl(S^k_{ij}(f)f'{}^i f'{}^j\bigr)\big|_{t=0}
= \frac{\partial S^k_{ij}}{\partial\bar z^m}(p)\,
  \overline{f'{}^m(0)}\, f'{}^i(0) f'{}^j(0)
= C^k_{\bar m ij}\,\overline{v^m}v^iv^j .
\]
We introduce the tensor $C_{\bar m}\in \Sym^2 T_p^*X \otimes T_pX$ whose components
are $C_{\bar m\, ij}^k$, and write $C_{\bar m}(v,v)$ for the vector with these components.
With this notation,
\[
 \partial_{\bar t}F_2(0) = \sum_m \overline{v^m}\, C_{\bar m}(v,v)
 = C_{\bar v}(v,v).
\]

Now $w(t) = \det(F_1(t),F_2(t))$, and therefore
\begin{equation}\label{eq:surface-wbar}
 w_{\bar t}(0) = \det\bigl(v,\,C_{\bar v}(v,v)\bigr).
\end{equation}

We now exploit the freedom of the third-order jet.  Vary only the ordinary jet
$J_3 = f'''(0)$ by adding a fixed vector $q\in T_pX$.  By Lemma~\ref{lem:triangular-jets},
this variation leaves $v$ and $a$ unchanged and adds exactly $q$ to $\partial_tF_2(0)$.
Differentiating $w(t)=\det(F_1,F_2)$ gives
\[
 w_t(0) = \det(\partial_tF_1(0),F_2(0)) + \det(F_1(0),\partial_tF_2(0)).
\]
Because $S(p)=0$, we have $F_2(0)=f''(0)=\partial_tF_1(0)=a$, so the first determinant
$\det(a,a)$ vanishes.  Thus $w_t(0) = \det(v,\partial_tF_2(0))$.  The $q$-linear part of
$\partial_tF_2(0)$ is $q$, so the $q$-linear part of $w_t(0)$ is $\det(v,q)$.

Lemma~\ref{lem:triangular-jets} also shows that $\partial_{\bar t}F_2(0)$ and
$\partial_t\partial_{\bar t}F_2(0)$ are independent of $q$.  Since
$\partial_{\bar t}F_1=0$, both $w_{\bar t}(0)$ and $w_{t\bar t}(0)$ are
independent of $q$.  Thus the only $q$-dependent term in
\eqref{eq:log-derivative} at $0$ is
\[
 -2\operatorname{Re}\!\left(
 \frac{\det(v,q)\,w_{\bar t}(0)}{w(0)^2}
 \right).
\]

Each $q$ is realized by a polynomial curve germ on a disk whose radius may
depend on $q$.  Since $w(0)=\det(v,a)\ne0$ is fixed, Wronskian curvature
positivity gives
\[
 c-2\operatorname{Re}\!\left(
 \frac{\det(v,q)\,w_{\bar t}(0)}{w(0)^2}
 \right) \ge 0
 \qquad\text{for all } q\in T_pX ,
\]
where $c\in\mathbb R$ is independent of $q$.
Apply Lemma~\ref{lem:scaling} to the complex linear functional
\[
 L(q)\coloneqq-\frac{\det(v,q)\,w_{\bar t}(0)}{w(0)^2}.
\]
It follows that $L$ vanishes identically.  Because $v\neq0$, the functional $q\mapsto\det(v,q)$ is
nonzero, and we conclude $w_{\bar t}(0)=0$.  Substituting this into
\eqref{eq:surface-wbar} yields
\[
 \det\!\bigl(v,\,C_{\bar v}(v,v)\bigr)=0 \qquad\text{for all } v\in T_pX .
\]
In a two-dimensional space, this means $C_{\bar v}(v,v)$ is proportional to $v$:
\[
 C_{\bar v}(v,v) \in \C v \qquad (v\in T_pX). \tag{*}
\]

The conjugate parameter can now be removed.  Condition~($*$) is equivalent to
$v\wedge C_{\bar v}(v,v)=0$, i.e.
\[
\sum_m \overline{v^m}\, \bigl(v\wedge C_{\bar m}(v,v)\bigr) = 0 .
\]
The left-hand side is a polynomial in the components of $v$ and their complex conjugates.
After complexification, $v^m$ and $\overline{v^m}$ become independent variables,
so each term $v\wedge C_{\bar m}(v,v)$ must vanish separately.  Hence,
\[
 v\wedge C_{\bar m}(v,v) = 0 \qquad\text{for each fixed $m$ and all } v\in T_pX .
\]
Thus $C_{\bar m}(v,v)$ is also parallel to $v$.  Because $S^k_{ij}$ is symmetric
in $i,j$, each $C_{\bar m}$ belongs to $\Sym^2(T_p^*X)\otimes T_pX$.
Lemma~\ref{lem:pure-trace} therefore provides linear forms $\beta_{\bar m}\in T_p^*X$
such that
\begin{equation}\label{eq:C-pure-trace}
 C^k_{\bar m ij} = \delta_i^k\beta_{j\bar m} + \delta_j^k\beta_{i\bar m}.
\end{equation}

Finally, differentiate the projective Christoffel symbols \eqref{eq:Pi} with respect
to $\bar z^m$.  Using \eqref{eq:C-pure-trace} and the trace relation
$C^a_{\bar m aj} = (n+1)\beta_{j\bar m}$ (which in dimension two reads
$C^a_{\bar m aj}=3\beta_{j\bar m}$), we obtain
\[
 \frac{\partial\Pi^k_{ij}}{\partial\bar z^m}
 = C^k_{\bar m ij}
   - \frac1{n+1}\bigl(\delta_i^k C^a_{\bar m aj} + \delta_j^k C^a_{\bar m ai}\bigr)
 = 0 .
\]
Thus $\dbar\Pi(p)=0$ at the chosen point.  Under a holomorphic change of coordinates,
the inhomogeneous part of the transformation law of $\Pi$ is holomorphic; consequently
$\dbar\Pi$ transforms tensorially.  Since $p$ was arbitrary, $\dbar\Pi=0$ on $X$.
Hence the projective class of $\nabla^S$ is holomorphic.

\section{Positivity forces a holomorphic projective class in higher dimension}\label{sec:higher-proof}

Now assume $n\geq3$.  Let $\nabla$ be an arbitrary smooth connection satisfying
Wronskian curvature positivity.  Fix $p\in X$ and, using Lemma~\ref{lem:normal},
choose normal coordinates for the symmetric part so that $S(p)=0$.

\medskip
\noindent\textbf{Step 1: Setting up a convenient curve.}
By Lemma~\ref{lem:triangular-jets} we can pick a holomorphic curve germ through~$p$
whose covariant jets
\[
E_r\coloneqq F_r(0)\qquad (1\le r\le n)
\]
form a basis of $V\coloneqq T_pX$.  Along the curve define
\[
\mathcal U(t)\coloneqq F_1(t)\wedge\cdots\wedge F_{n-1}(t),\qquad
U_0\coloneqq\mathcal U(0)=E_1\wedge\cdots\wedge E_{n-1},
\]
and set $H\coloneqq\Span(E_1,\dots,E_{n-1})\subset V$.  In a local holomorphic frame
of $K_X^*$ we write the Wronskian as $W(\nabla,f)=w(t)\,\text{(frame)}$, so that
\[
w(0)=E_1\wedge\cdots\wedge E_n\neq0,
\]
identifying an $n$-vector with its scalar coefficient.

\medskip
\noindent\textbf{Step 2: Variation of the highest jet.}
We now vary only the ordinary $(n+1)$-st jet $J_{n+1}=f^{(n+1)}(0)$ by adding an
arbitrary vector $q\in V$.  Lemma~\ref{lem:triangular-jets} tells us that this
variation leaves all $F_r(0)$ ($1\le r\le n$) unchanged, while $\partial_tF_n(0)$
acquires exactly the term $q$.  No other $\partial_tF_j(0)$ depends on $q$.
Consequently, the $q$-linear part of $w_t(0)$ is
\begin{equation}\label{eq:qt-wt}
 U_0\wedge q .
\end{equation}
Set $D_j\coloneqq\partial_{\bar t}F_j(0)$.  By Lemma~\ref{lem:triangular-jets}, the
$D_j$ and $\partial_t\partial_{\bar t}F_n(0)$ are independent of $q$.
Differentiating $w=\mathcal U\wedge F_n$ therefore shows that the
$q$-linear part of $w_{t\bar t}(0)$ is
\begin{equation}\label{eq:qt-wtb}
 \partial_{\bar t}\mathcal U(0)\wedge q .
\end{equation}
The quantities $w(0)$ and $w_{\bar t}(0)$ are also independent of $q$.

\medskip
\noindent\textbf{Step 3: Using subharmonicity.}
Substituting \eqref{eq:qt-wt} and \eqref{eq:qt-wtb} into
\eqref{eq:log-derivative}, we find that the $q$-linear contribution is
\[
2\operatorname{Re}\!\left[
  \frac{\bigl((\partial_{\bar t}\mathcal U)(0)-\lambda U_0\bigr)\wedge q}{w(0)}
\right],
\qquad
\lambda\coloneqq\frac{w_{\bar t}(0)}{w(0)}.
\]
The remaining terms are independent of $q$.

As in the surface case, each $q$ is realized on a sufficiently small disk,
and $w(0)\ne0$ is fixed.  Wronskian curvature positivity at $0$ gives
the inequality below for a real constant $c$ independent of $q$:
\[
c+2\operatorname{Re}\!\left[
\frac{\bigl((\partial_{\bar t}\mathcal U)(0)-\lambda U_0\bigr)\wedge q}{w(0)}
\right]\ge0
\qquad\text{for all }q\in V.
\]
Lemma~\ref{lem:scaling} therefore shows that the complex linear functional
inside the real part vanishes identically.  Lemma~\ref{lem:wedge-nondegenerate} then implies
\begin{equation}\label{eq:Uparallel}
 (\partial_{\bar t}\mathcal U)(0)=\lambda U_0 .
\end{equation}

\medskip
\noindent\textbf{Step 4: Geometric interpretation.}
Expand $\partial_{\bar t}\mathcal U(0)$ using the Leibniz rule:
\[
(\partial_{\bar t}\mathcal U)(0)
 = \sum_{j=1}^{n-1} E_1\wedge\cdots\wedge D_j\wedge\cdots\wedge E_{n-1}.
\]
Pick any vector $N\notin H$ and decompose $D_j = D_j^H + c_jN$ with $D_j^H\in H$.
The components along $N$ contribute
\[
c_j\;E_1\wedge\cdots\wedge N\wedge\cdots\wedge E_{n-1},
\]
where $N$ occupies the $j$-th slot.  Together with $U_0$, these $(n-1)$-vectors
form a basis of $\bigwedge^{n-1}V$.  Since the right-hand side of \eqref{eq:Uparallel}
lies in $\mathbb{C}\,U_0$, all coefficients $c_j$ must vanish.  Consequently,
\begin{equation}\label{eq:Dj-in-H}
 D_j\in H\qquad (1\le j\le n-1).
\end{equation}

\medskip
\noindent\textbf{Step 5: The condition on $D_2$.}
In particular $D_2\in H$.  Recall that in coordinates
\[
F_2 = f'' + S(f)(f',f'),
\]
since the skew part of $\Gamma$ vanishes on $(f',f')$.
With $v\coloneqq E_1=f'(0)$, holomorphicity of $f'$ and $f''$ gives
\[
D_2 = (\dbar S)_p(\bar v; v,v).
\]
In components, set
$C^k_{\bar m ij}\coloneqq\frac{\partial S^k_{ij}}{\partial\bar z^m}(p)$.
Then
\[
C_{\bar v}(v,v)\coloneqq\sum_m \overline{v^m}\, C_{\bar m}(v,v) = D_2.
\]

\medskip
\noindent\textbf{Step 6: From one hyperplane to all hyperplanes.}
Fix $v\neq0$, and let $H\subset V$ be an arbitrary hyperplane containing
$\mathbb C v$.  Choose $E_2,\dots,E_{n-1}$ so that
$v,E_2,\dots,E_{n-1}$ is a basis of $H$, and then choose $E_n\notin H$.
Lemma~\ref{lem:triangular-jets} realizes these vectors as the covariant jets of a
holomorphic curve germ.  Equation~\eqref{eq:Dj-in-H} gives
$C_{\bar v}(v,v)=D_2\in H$.  The left-hand side depends only on $v$ and the
connection at $p$, not on the auxiliary choice of $H,E_2,\dots,E_n$.  Since $H$
was arbitrary and the intersection of all hyperplanes containing $\mathbb C v$
is $\mathbb C v$, we obtain
\[
C_{\bar v}(v,v)\in\mathbb{C}\,v \qquad\text{for all } v\in V .
\]

\medskip
\noindent\textbf{Step 7: Elimination of the conjugate variable and conclusion.}
Wedge the preceding relation with $v$.  We obtain the polynomial identity
\[
0=v\wedge C_{\bar v}(v,v)
 =\sum_m \overline{v^m}\,\bigl(v\wedge C_{\bar m}(v,v)\bigr).
\]
After writing $v=x+iy$, this is a vector-valued polynomial identity in the
independent real variables $x$ and $y$; equivalently, after complexification one
may regard $v$ and $\bar v$ as independent.  Therefore the coefficient of each
$\overline{v^m}$ vanishes:
\[
v\wedge C_{\bar m}(v,v)=0
\qquad\text{for every $m$ and every $v\in V$}.
\]
Thus, for each fixed $m$,
\[
C_{\bar m}(v,v)\in\mathbb{C}\,v\qquad (v\in V).
\]
Since each $C_{\bar m}$ is a symmetric tensor, Lemma~\ref{lem:pure-trace} supplies
linear forms $\beta_{\bar m}$ satisfying \eqref{eq:C-pure-trace}.
Computing the trace gives $C^a_{\bar m aj}=(n+1)\beta_{j\bar m}$.
Finally, by~\eqref{eq:Pi} and this trace relation, we obtain
\[
\frac{\partial\Pi^k_{ij}}{\partial\bar z^m}
 = C^k_{\bar m ij}
   -\frac1{n+1}\bigl(\delta_i^k C^a_{\bar m aj}+\delta_j^k C^a_{\bar m ai}\bigr)
 = 0 .
\]
Hence $\dbar\Pi=0$ at $p$, and since $p$ was arbitrary, the projective class
of $\nabla^S$ is holomorphic.  Together with Section~\ref{sec:surface-proof},
this completes the proof of Theorem~\ref{thm:torsion-forward} for all
$n\ge 2$.

\section{From holomorphic projective connections back to positivity}\label{sec:converse}

The converse follows from the invariance of the Wronskian under projective
changes of torsion-free connections.

\subsection{Exact projective invariance of the Wronskian}\label{sec:projective-invariance}

\begin{proposition}\label{prop:projective-invariance}
Let $D$ and $\widehat D$ be projectively equivalent torsion-free connections,
related by~\eqref{eq:projective-change}.  Then for every holomorphic curve $f$,
\[
 W(\widehat D,f)=W(D,f).
\]
\end{proposition}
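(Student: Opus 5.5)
We prove by induction on $r$ that the covariant jets of $f$ with respect to the two connections satisfy
\[
 \widehat F_r = F_r + \sum_{s<r} c_{r,s}\,F_s ,
\]
where the $c_{r,s}$ are smooth functions of $t$ along the curve. Here $F_1=f'$, $F_{r+1}=D_{f'}F_r$, and $\widehat F_r$ is defined the same way using $\widehat D$. Once this is established, the transition matrix from $(F_1,\dots,F_n)$ to $(\widehat F_1,\dots,\widehat F_n)$ is unipotent lower triangular. Expanding the wedge product, every term containing a lower $F_s$ has a repeated factor, so
\[
 \widehat F_1\wedge\cdots\wedge\widehat F_n=F_1\wedge\cdots\wedge F_n .
\]
This holds pointwise in $t$, which gives $W(\widehat D,f)=W(D,f)$. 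The argument is local, and the $1$-form $\alpha$ in \eqref{eq:projective-change} is only locally defined, so it suffices to work over a small subdisk.

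The base case is immediate: $\widehat F_1=F_1=f'$. For the inductive step, we use \eqref{eq:projective-change} with $U=f'$:
\[
 \widehat D_{f'}V = D_{f'}V+\alpha(f')V+\alpha(V)f' .
\]
Note that $D_{f'}$ of a section along $f$ is well defined via the pullback connection. Apply this to
\[
 \widehat F_r=F_r+\sum_{s<r}c_{r,s}F_s .
\]
By the Leibniz rule,
\[
 \widehat F_{r+1} = D_{f'}F_r+\sum_{s<r}\bigl(\partial_t c_{r,s}\,F_s + c_{r,s}F_{s+1}\bigr) + \alpha(f')\widehat F_r+\alpha(\widehat F_r)\,F_1 .
\]
Here $D_{f'}F_r=F_{r+1}$, and every other term is a smooth multiple of some $F_s$ with $s\le r$. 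Note that $F_1=f'$, and that $c_{r,s}F_{s+1}$ has $s+1\le r$. So the induction closes with coefficient $1$ on $F_{r+1}$.

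The only point needing care, which I expect to be the main (mild) obstacle, is justifying the Leibniz rule along the curve for the $\partial_t$ derivatives of the non-holomorphic functions $c_{r,s}$. Concretely, the definition in \eqref{Noguchi connection} uses $U(a)$, and along $f$ this means $f'(a)=\partial_t a$. We must check that the $(1,0)$-derivative $\partial_t$, not $\partial_{\bar t}$, is what appears. In local coordinates,
\[
 D_{f'}(V^k\partial_k)=(\partial_t V^k+\Gamma^k_{ij}(f)f'^iV^j)\partial_k ,
\]
which confirms this and also makes sense for sections along $f$ that are not restrictions of vector fields. Torsion-freeness is not needed in the computation itself; it enters only through the hypothesis form \eqref{eq:projective-change}.
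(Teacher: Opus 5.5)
Your proof is correct and follows essentially the same route as the paper. Both arguments run an induction showing $\widehat F_r = F_r + \sum_{s<r} c_{r,s}F_s$ via the projective-change formula and the Leibniz rule along the curve, and then use the unipotent triangular relation to get equality of the wedge products; your remark that $D_{f'}$ acts on sections along $f$ through the coordinate formula with $\partial_t$ is a harmless clarification that the paper leaves implicit.
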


\begin{proof}
Set $F_1\coloneqq f'$ and $F_{r+1}\coloneqq D_{f'}F_r$, and define $\widehat F_r$ analogously
using $\widehat D$.  We prove by induction on $r$ that
\begin{equation}\label{eq:triangular-projective}
 \widehat F_r = F_r + \sum_{j<r} a_{rj} F_j
\end{equation}
for suitable smooth functions $a_{rj}$ along the curve.

For $r=1$ the statement is trivial.  For $r=2$, using
\eqref{eq:projective-change} we obtain
\[
\widehat F_2 = \widehat D_{f'}f' = D_{f'}f' + 2\alpha(f')f' = F_2 + 2\alpha(f')F_1,
\]
which is of the required form with $a_{21}=2\alpha(f')$.

Assume \eqref{eq:triangular-projective} holds for $\widehat F_r$.  Then
\[
\begin{aligned}
\widehat F_{r+1}
&= \widehat D_{f'}\widehat F_r \\
&= D_{f'}\widehat F_r + \alpha(f')\widehat F_r + \alpha(\widehat F_r)f'
 \qquad\text{(by \eqref{eq:projective-change})}.
\end{aligned}
\]
Insert the inductive hypothesis for $\widehat F_r$ and expand $D_{f'}\widehat F_r$:
\[
\begin{aligned}
D_{f'}\widehat F_r
&= D_{f'}\Bigl(F_r + \sum_{j<r} a_{rj} F_j\Bigr) \\
&= F_{r+1} + \sum_{j<r} (\partial_t a_{rj}) F_j + \sum_{j<r} a_{rj} F_{j+1},
\end{aligned}
\]
where $\partial_t$ denotes the derivative along the curve.  The coefficient of
$F_{r+1}$ is exactly $1$.  All other terms in this expression, together with the two
$\alpha$-terms $\alpha(f')\widehat F_r$ and $\alpha(\widehat F_r)f'$, lie in
$\operatorname{Span}(F_1,\dots,F_r)$.  Hence $\widehat F_{r+1}$ is again of the
form $F_{r+1} + \sum_{j\le r} a_{r+1,j}F_j$, completing the induction.

The triangular relation \eqref{eq:triangular-projective} has diagonal
coefficients equal to $1$, so the wedge products coincide.
\end{proof}

\subsection{Holomorphic representatives yield holomorphic Wronskians}\label{sec:holomorphic-representative}

\begin{proposition}\label{prop:holomorphic-rep}
Let $D$ be a torsion-free holomorphic connection on an open set.  Then for any
holomorphic curve $f$, the section $W(D,f)$ is holomorphic.
\end{proposition}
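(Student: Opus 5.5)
The plan is to show by induction that each covariant derivative $F_r$ is a holomorphic section of $f^*T_X$, computed in a local holomorphic chart. The wedge $F_1\wedge\cdots\wedge F_n$ written in the holomorphic frame $\partial_1\wedge\cdots\wedge\partial_n$ is then a polynomial in the components of the $F_r$, hence holomorphic. By the change-of-frame formula \eqref{change of frame}, holomorphicity of the coefficient does not depend on the holomorphic frame, so the conclusion is local and frame independent.

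Fix a point of the disk and a holomorphic coordinate chart $z$ around its image, and write $D_{\partial_i}\partial_j=\Gamma^k_{ij}\partial_k$ with $\Gamma^k_{ij}$ holomorphic; this is what ``holomorphic connection'' means here. For a section $V=V^k\partial_k$ along $f$, the covariant derivative is
\[
 (D_{f'}V)^k=\partial_t V^k+\Gamma^k_{ij}(f(t))\,f'{}^i(t)\,V^j(t).
\]
First, $F_1=f'$ has holomorphic components. Next, if the components of $F_r$ are holomorphic in $t$, then $\partial_tF_r^k$ is holomorphic. Also $\Gamma^k_{ij}\circ f$ is holomorphic, being a composition of holomorphic maps, and $f'{}^i$ is holomorphic, so every term in $F_{r+1}^k$ is holomorphic. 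This completes the induction.

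One subtlety should be addressed. The derivative $\nabla_UV$ in \eqref{Noguchi connection} is defined for vector fields on $X$, whereas $F_r$ is only a section along $f$, possibly at points where $f'=0$ or $f$ is not injective. I would note that the pullback connection on $f^*T_X$ is given by the coordinate formula above; this is the same convention already used in Lemma~\ref{lem:triangular-jets} and Section~\ref{sec:surface-proof}. Torsion-freeness is not needed for holomorphicity; it is listed only because the proposition will be combined with Proposition~\ref{prop:projective-invariance}. No step should be a real obstacle: the only point needing care is the remark that a smooth connection with holomorphic Christoffel symbols keeps holomorphic data holomorphic under $\partial_t$ along holomorphic curves.
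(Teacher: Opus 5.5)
Your proposal is correct and follows essentially the same route as the paper: an induction showing that each $F_r$ is holomorphic, using $F_{r+1}^k=\partial_tF_r^k+\Gamma^k_{ij}(f)\,f'{}^iF_r^j$ with holomorphic $\Gamma$, followed by taking the wedge product. Your explicit coordinate formula, the frame-independence remark, and the observation that torsion-freeness is not needed simply spell out what the paper's short induction leaves implicit.
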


\begin{proof}
$F_1=f'$ is holomorphic.  If $F_r$ is holomorphic, then
$F_{r+1}=D_{f'}F_r$ is holomorphic because $f$ is holomorphic, the coefficients
of $F_r$ are holomorphic, and the Christoffel symbols of $D$ are holomorphic.
By induction, every $F_r$ is holomorphic; hence their wedge product is holomorphic.
\end{proof}

\noindent
If the projective class of a torsion-free smooth connection $\nabla$ is holomorphic,
Lemma~\ref{lem:local-holomorphic-representative} provides, near every point, a local
torsion-free holomorphic representative $D$ in the same projective class.  Since
$D$ and $\nabla$ are projectively equivalent, Proposition~\ref{prop:projective-invariance}
yields $W(\nabla,f)=W(D,f)$, and Proposition~\ref{prop:holomorphic-rep} implies that
$W(D,f)$ is holomorphic.  The logarithmic modulus of a holomorphic function (allowing
$-\infty$ at zeros) is subharmonic, so $\nabla$ satisfies Wronskian curvature
positivity.  Together with the forward direction proved in
Sections~\ref{sec:surface-proof} and~\ref{sec:higher-proof}, this establishes
Theorem~\ref{thm:connection-level}.

\subsection{Globalisation: from local representatives to a global smooth connection}\label{sec:globalization}

We now assume only that $X$ admits a holomorphic projective connection, i.e.\ a
holomorphic projective class.  Choose an open cover $\{U_\alpha\}$ and, on each
$U_\alpha$, a torsion-free holomorphic representative $D_\alpha$ of that class.
On overlaps $U_\alpha\cap U_\beta$, the difference $D_\alpha-D_\beta$ is a pure-trace
term, so there exists a holomorphic $1$-form $\eta_{\alpha\beta}$ on the overlap
such that
\[
(D_\alpha-D_\beta)_U V = \eta_{\alpha\beta}(U)V + \eta_{\alpha\beta}(V)U .
\]

Let $\{\rho_\alpha\}$ be a locally finite smooth partition of unity subordinate to
the cover, and define a global connection by
\[
\nabla \coloneqq \sum_\alpha \rho_\alpha D_\alpha .
\]
Because $\sum_\alpha \rho_\alpha=1$, this is again a connection; it is torsion-free
since each $D_\alpha$ is.  On a fixed open set $U_\beta$,
\[
\nabla - D_\beta
 = \Bigl(\sum_\alpha \rho_\alpha \eta_{\alpha\beta}\Bigr) \odot \operatorname{id},
\]
where $(\eta\odot\operatorname{id})_{U,V}\coloneqq\eta(U)V+\eta(V)U$.  Thus $\nabla$ differs
from $D_\beta$ by a pure-trace term; consequently $\nabla$ and $D_\beta$ are
projectively equivalent.

Applying Proposition~\ref{prop:projective-invariance} to this projective equivalence
gives $W(\nabla,f)=W(D_\beta,f)$ for any holomorphic disk $f$ whose image lies in
$U_\beta$.  The right-hand side is holomorphic by Proposition~\ref{prop:holomorphic-rep},
so $\log|W(\nabla,f)|$ is subharmonic.  Hence $\nabla$ satisfies Wronskian curvature
positivity.  This proves the implication (c)$\Rightarrow$(b) in
Theorem~\ref{thm:existence-equivalence}.

Finally, (b)$\Rightarrow$(a) is obvious, while (a)$\Rightarrow$(c) is exactly
Theorem~\ref{thm:torsion-forward}.  Therefore the three conditions in
Theorem~\ref{thm:existence-equivalence} are equivalent, and the proof of the main
theorems is complete.

\begin{samepage}
\section{Geometric consequences}
\label{sect: 6}

\subsection{The K\"ahler--Einstein trichotomy}
Theorem~\ref{thm:existence-equivalence} and the classification of holomorphic
projective connections on compact K\"ahler--Einstein manifolds give the following
result.

\begin{theorem}
 \label{cor:KE}
Let $X$ be a compact K\"ahler--Einstein manifold of dimension $n\ge2$.  Then $X$
admits a connection with Wronskian curvature positivity if and only if it is
\begin{itemize}
  \item complex projective space $\Pj^n$, or
  \item a finite \'etale quotient of a compact complex torus (with the induced flat
        projective connection), or
  \item a compact ball quotient.
\end{itemize}
\end{theorem}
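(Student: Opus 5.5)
The plan is to reduce the statement to Theorem~\ref{thm:existence-equivalence} together with the Jahnke--Radloff classification \cite[Theorem~0.1]{JahnkeRadloff2015}. By Theorem~\ref{thm:existence-equivalence}, $X$ admits a connection with Wronskian curvature positivity if and only if it admits a holomorphic projective connection, that is, a holomorphic projective class in the sense of the introduction. The proof therefore amounts to identifying compact K\"ahler--Einstein manifolds carrying a holomorphic projective connection, and checking that our local definition agrees with the one used by Jahnke and Radloff.

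\textbf{Matching definitions.} I would first verify this agreement. Lemma~\ref{lem:local-holomorphic-representative} shows that a holomorphic projective class is given by local torsion-free holomorphic connections whose differences on overlaps have the pure-trace form \eqref{eq:projective-change}. This is the classical cocycle description of a normal holomorphic projective connection. For compact K\"ahler $X$ it is equivalent to the Atiyah-class condition of \cite[Definition~1.12]{JahnkeRadloff2015}: the obstruction to gluing the local holomorphic connections modulo pure-trace terms is exactly the trace-free part of the Atiyah class of $T_X$, relative to the term built from $c_1$. I expect this identification to be the main, though largely bookkeeping, obstacle. Concretely, I would write the \v{C}ech cocycle $D_\alpha-D_\beta\in H^1(X,\Omega^1\otimes\mathrm{End}\,T_X)$ and check that its projection killing pure-trace terms is the defining class in their definition. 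Their notion allows any holomorphic section of the normal-projective type, so the two conditions should coincide verbatim.

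\textbf{Forward direction.} Given a connection with Wronskian curvature positivity, Theorem~\ref{thm:existence-equivalence} produces a holomorphic projective connection. \cite[Theorem~0.1]{JahnkeRadloff2015} then says that a compact K\"ahler--Einstein manifold with such a structure is $\Pj^n$, a finite \'etale quotient of a torus, or a compact ball quotient. If their theorem is phrased with curvature sign cases, I would split according to the sign of $c_1$: Fano gives $\Pj^n$, Ricci-flat gives a torus quotient, and $K_X$ ample gives a ball quotient.

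\textbf{Converse.} I would exhibit a holomorphic projective connection in each case and then apply (c)$\Rightarrow$(a) of Theorem~\ref{thm:existence-equivalence}.
\begin{itemize}
\item On $\Pj^n$, the flat projective structure from affine charts works: the standard flat connections in affine charts differ by pure-trace terms, since linear fractional maps preserve lines.
\item On a torus $\C^n/\Lambda$, the flat connection $d$ is translation invariant and descends. It also descends to a finite \'etale quotient, because the deck group acts by affine maps preserving $d$.
\item On a ball quotient $\mathbb B^n/\Gamma$, I would use the flat projective structure of $\mathbb B^n\subset\C^n\subset\Pj^n$. It is invariant under $PU(n,1)\subset PGL(n+1,\C)$, which acts by projective transformations, so it descends to the quotient.
\end{itemize}
Combining the two directions gives the trichotomy.
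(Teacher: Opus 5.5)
Your proposal is correct and follows the paper's proof. The paper uses Theorem~\ref{thm:existence-equivalence} to reduce to the existence of a holomorphic projective connection and then invokes the Kobayashi--Ochiai / Jahnke--Radloff classification of compact K\"ahler--Einstein manifolds. Your explicit converse constructions (the $\mathrm{PGL}$-invariant flat projective structure on $\Pj^n$ and on the ball, and affine deck transformations on tori) and your Atiyah-class comparison of the two definitions are correct, and they spell out what the paper leaves to those citations.
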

\end{samepage}

\begin{proof}
By Theorem~\ref{thm:existence-equivalence}, Wronskian curvature positivity is
equivalent to the existence of a holomorphic projective connection.  The
classification of compact K\"ahler--Einstein manifolds admitting such a connection
is exactly the three cases listed; see~\cite{KobayashiOchiai1980} and
\cite[Theorem~0.1]{JahnkeRadloff2015}.
\end{proof}

These three cases correspond to the three possible signs of the first Chern class:
positive for $\Pj^n$, zero for torus quotients, and negative for ball quotients
(the latter being of general type).  Hence Wronskian curvature positivity is
\emph{not} incompatible with general type, but among K\"ahler--Einstein manifolds
of general type it singles out precisely the ball quotients.

\begin{remark}
Theorem~\ref{cor:KE} should not be misread as a classification of \emph{all}
projective manifolds with holomorphic projective connections.  Away from the
K\"ahler--Einstein setting, additional higher-dimensional examples exist,
for instance certain modular abelian families over quaternionic Shimura curves;
see~\cite[Theorem~0.2]{JahnkeRadloff2015}.
\end{remark}

\subsection{Proof of Theorem~\ref{thm:general-type}}
\label{sec:proof-general-type}

\begin{proof}
The equivalence between (1) and (2) is Theorem~\ref{thm:existence-equivalence}.
Assume that $X$ admits a holomorphic projective connection.  Since $X$ is
of general type, its canonical bundle is big, and in particular
$X\not\simeq \mathbb P^n$.  By \cite[Corollary~3.5]{JahnkeRadloff2015},
the canonical bundle of a projective manifold with a holomorphic projective
connection is nef unless the manifold is projective space.  Hence $K_X$ is
big and nef.  It follows from \cite[Corollary~3.6(1)]{JahnkeRadloff2015} that
$X$ is a compact ball quotient.

Conversely, every compact ball quotient carries a flat holomorphic normal
projective connection, hence a holomorphic projective connection
\cite[Theorem~0.1]{JahnkeRadloff2015}.  Applying
Theorem~\ref{thm:existence-equivalence} gives a smooth connection satisfying
Wronskian curvature positivity.
\end{proof}

\subsection{Why nonlinear hypersurfaces are excluded}\label{sec:hypersurfaces}
A holomorphic projective connection imposes a strong Chern-class identity.  Let
$c_i(X)\coloneqq c_i(T_X)$.  For compact K\"ahler manifolds our local definition of a
holomorphic projective connection coincides with the normalized Atiyah-class
formulation of~\cite[Definition~1.12]{JahnkeRadloff2015}.  The general Chern-class
relation~\cite[Section~1.15, eq.~(1.16)]{JahnkeRadloff2015} specialises in
dimension $n$ to
\begin{equation}\label{eq:chern-id}
 2(n+1)\,c_2(X) = n\,c_1(X)^2 \qquad\text{in } H^4(X,\mathbb Q).
\end{equation}

Now let $X_d\subset\Pj^{n+1}$ be a smooth complex hypersurface of degree $d$ and
dimension $n\ge2$, and write $H\coloneqq c_1(\mathcal O_{X_d}(1))$ for the hyperplane class.
A standard computation  gives
\[
 2(n+1)\,c_2(X_d) - n\,c_1(X_d)^2
 = (n+2)(d-1)^2 H^2 .
\]
For $n\ge2$, the class $H^2$ is non-zero in $H^4(X_d,\mathbb Q)$; indeed,
$H^{n-2}\cdot H^2 = H^n = d \neq 0$.  Therefore the Chern-class identity
\eqref{eq:chern-id} forces $(n+2)(d-1)^2 = 0$, i.e.\ $d=1$.

Thus, among smooth projective hypersurfaces of dimension at least two,
Wronskian curvature positivity occurs only for the linear hypersurface $\Pj^n$.

\bigskip
\noindent{\bf Acknowledgements.}
The authors are grateful to  Junjiro Noguchi for his inspiring talk
``On Green--Griffiths--Lang Conjecture and Wronskian Curvature'' at the
Seminar on Complex Geometry, Institute of Mathematics, Academia Sinica,
Taipei, on 5~June~2026, in which he explained his second main theorem and
the idea of applying it to the Green--Griffiths--Lang conjecture.  This talk provided the initial motivation for the
present work.  We also warmly thank  Julie Tzu-Yueh Wang for
organizing the seminar.  We thank the Rethlas system for checking the correctness
of this paper.  We also used ChatGPT~5.5~Plus to help organize the manuscript
and assist with some of the computations.  S.-Y.\ Xie gratefully acknowledges
the hospitality of Academia Sinica during his visit.

\bigskip
\noindent{\bf Funding} \  
S.-Y. Xie acknowledges partial support from the National Key R\&D Program of China under Grants No. 2023YFA1010500 and No. 2021YFA1003100, and from the National Natural Science Foundation of China under Grants No. 12288201 and No. 12471081, as well as support from the  Xiaomi Young Talents Program.

\end{document}